  \CheckCommand*\refstepcounter[1]{\stepcounter{#1}%
      \protected@edef\@currentlabel
       {\csname p@#1\endcsname\csname the#1\endcsname}%
  }
  \renewcommand*\refstepcounter[1]{\stepcounter{#1}%
    \protected@edef\@currentlabel
      {\csname p@#1\expandafter\endcsname\csname the#1\endcsname}%
  }
  \def\labelformat#1{\expandafter\def\csname p@#1\endcsname##1}
  \DeclareRobustCommand\Ref[1]{\protected@edef\@tempa{\ref{#1}}%
     \expandafter\MakeUppercase\@tempa
  }
  \newcommand{\numberlike}[2]{%
     \expandafter\def\csname c@#1\endcsname{%
         \expandafter\csname c@#2\endcsname}%
  }
  \def\DefaultNumberTheoremWithin{section}
  \theoremstyle{plain}
  \newtheorem{Lemma}{Lemma}
     \numberwithin{Lemma}{\DefaultNumberTheoremWithin}
     \numberwithin{Claim}{\DefaultNumberTheoremWithin}
  \newtheorem{Theorem}{Theorem}
     \numberwithin{Theorem}{\DefaultNumberTheoremWithin}
  \newtheorem{Corollary}{Corollary}
     \numberwithin{Corollary}{\DefaultNumberTheoremWithin}
  \newtheorem{Proposition}{Proposition}
     \numberwithin{Proposition}{\DefaultNumberTheoremWithin}
  \newtheorem{Conjecture}{Conjecture}
     \numberwithin{Conjecture}{\DefaultNumberTheoremWithin}
  \theoremstyle{definition}
  \newtheorem{Definition}{Definition}
     \numberwithin{Definition}{\DefaultNumberTheoremWithin}
  \theoremstyle{definition}
     \numberwithin{Question}{\DefaultNumberTheoremWithin}
  \theoremstyle{definition}
     \numberwithin{Problem}{\DefaultNumberTheoremWithin}
  \theoremstyle{remark}
     \numberwithin{Remark}{\DefaultNumberTheoremWithin}
  \theoremstyle{remark}
     \numberwithin{Example}{\DefaultNumberTheoremWithin}
     \numberwithin{Case}{Lemma}
     \numberwithin{Step}{Lemma}
  \def\eqref{\ref}
  \def\ZZ{\mathbb{Z}}
   \def\QQ{\mathbb{Q}}
    \def\RR{\mathbb{R}}
  \def\Ker{\mathrm{Ker}}
  \def\Image{\mathrm{Im}}
  \def\Hom{\mathcal{H}}
  \def\Cube{{\normalfont {\textrm{Cube}}}}
  \def\Sing{{\normalfont {\textrm{Sing}}}}
  \def\Cell{{\normalfont {\textrm{Cell}}}}
  \def\CC{{\mathcal{C}}}
  \def\LL{{\mathcal{L}}}
   \def\DD{{\mathcal{D}}}
   \def\kK{{\mathcal{K}}} 
  \def\tilsig{\tilde{\sigma}}
\begin{document}
 
 \title 
{On the Vanishing of Discrete Singular Cubical Homology for Graphs}

\author[Barcelo]{H\'{e}l\`{e}ne Barcelo}

\address[H\'{e}l\`{e}ne Barcelo]
{
The Mathematical Sciences Research Institute, 17 Gauss Way, Berkeley, CA 94720, USA
}
\email{hbarcelo@msri.org}

\author[Greene]{Curtis Greene}

\address[Curtis Greene]
{
Haverford College, Haverford, PA 19041, USA
}
\email{cgreene@haverford.edu}

\author[Jarrah]{Abdul Salam Jarrah}

\address[Abdul Jarrah]
{
Department of Mathematics and Statistics, American University of Sharjah, PO Box 26666, Sharjah, United Arab Emirates
}
\email{ajarrah@aus.edu}

\author[Welker]{Volkmar Welker}

\address[Volkmar Welker]
{
Fachbereich Mathematik und Informatik, Philipps-Universit\"at, 35032 Marburg, Germany
}
\email{welker@mathematik.uni-marburg.de}

\thanks{This material is based upon work supported by the National Science Foundation under Grant No. DMS-1440140 while the authors were in residence at the
Mathematical Sciences Research Institute in Berkeley, California, USA}
\keywords{Discrete cubical homology, subdivisions of graph maps, coverings of graphs, homology of graphs}
\subjclass{05C10, 05E99, 55N35}
\begin{abstract}
 We prove that if $G$ is a graph without 3-cycles and 4-cycles, then the discrete cubical homology of $G$ is trivial in dimension $d$, for all $d\ge 2$. We also construct a sequence $\{G_d\}$ of graphs such that this homology is
 non-trivial in dimension $d$ for $d\ge 1$. Finally, we show that  the discrete cubical homology
 induced by certain coverings of $G$ equals the ordinary singular homology of a $2$-dimensional cell complex built from $G$, although in general it differs from the discrete cubical homology of the graph as a whole.
\end{abstract}
\date{\today}

\maketitle

\section{Introduction}

We will be concerned with properties of a discrete (singular) cubical homology theory $\Hom^\Cube(G)$ for undirected graphs $G$, originally defined by Barcelo, Capraro, and White in \cite{BCW} for general 
metric spaces. In this paper we develop a discrete subdivision tool, facilitating computation and leading to the proofs of several conjectures made in \cite{BGJW}.

The homology theory defined  in \cite{BCW} had its roots in earlier work of Barcelo, Kramer, Laubenbacher, and Weaver \cite{BKLW}, which introduced a bi-graded family of discrete homotopy groups, $A_n^q(\Delta, x_0)$  for simplicial complexes. 
Graphical  (or equivalently, $1$-dimensional) versions of this homotopy theory were later studied by several authors including Babson, Barcelo, de Longueville, and Laubenbacher  \cite{BBLL}, and also Grigor'yan, Lin, Muranov, and Yau \cite{GLMY2}. In 2006, the authors of  \cite{BBLL} proposed the problem of finding a corresponding discrete homology theory, and  \cite{BCW} provided a solution in 2014.  Another homology theory for digraphs was defined by Grigor'yan et.\ al.\ in \cite{GLMY2}. However, \cite{BGJW} showed that, although this theory agrees with that of \cite{BCW} in dimension $1$, the two theories can differ in higher dimensions.

Aside from theoretical interest, motivation for studying discrete homotopy and discrete homology of graphs also comes from applications in pure and applied mathematics. In 1973, while studying base exchange graphs of matroids, Maurer \cite{Mau} proposed a new notion of fundamental groups for graphs, $\pi_1^*(G).$ As it turns out, for any graph $G$, 
Maurer's $\pi_1^*(G)$ is isomorphic to $A_1^1(G).$
A physicist, Atkin \cite{Atkin1974, Atkin1976}, developed similar ideas (also in dimension $1$) for applications to network analysis. Both Atkin and Maurer used their theory to measure a form of connectivity of graphs pertinent to their applications but different from standard connectivity measures in graph theory or algebraic topology. Recently,  the theory has found application, among others, in the study of complements of subspace arrangements (Barcelo, Severs and White \cite{BSW}), in coarse geometry (Delabie and Khukhro \cite{Khukhro,Vigolo},  
and in finite metric spaces (Rieser \cite{Rieser}).  More material on real world applications can be found in \cite{KraetzlLaubenbacher}.

In much of the work just described, the  3- and 4-cycles in a graph $G$ play a special role.  For example, in \cite{BKLW} it is shown that for any graph, the discrete fundamental group $A_1^1(G)$ is isomorphic to the ordinary fundamental group $\pi_1(K(G))$ of the cell complex obtained by attaching 2-cells to the 3- and 4-cycles of $G$, viewed as a 1-complex. This construction also appears in work of Lov\'asz \cite{Lovasz}, who used topological arguments to obtain results about 
connectivity of graphs. For a discussion  placing this work in the more general context of topological methods in
combinatorics, see \cite[Section 6]{Bj}. Our first main result (\ref{maintheorem}) will also feature $3$- and $4$-cycles in an essential way.

In \cite{BGJW}, the authors developed tools for computing the discrete singular cubical homology
groups $\Hom_d^\Cube(G)$ 
(see Section 2 of this paper for a precise definition)
for many families of graphs $G$. Since relevant chain groups grow super-exponentially in rank, 
direct computation presents formidable difficulties, even for small graphs.  For example, when $G  = \ZZ_5$, the pentagon (5-cycle) graph, the methods 
of \cite{BGJW} were not sufficient to determine 
$\Hom_d^\Cube(G)$ beyond $d=3$.  
In the present paper we prove that $\ZZ_5$ has vanishing discrete singular cubical homology in dimension $d$, for
all $d\ge 2$. This result was conjectured in \cite{BGJW}. In fact we will prove a stronger conjecture, also made in \cite{BGJW}.

\begin{Theorem}\label{maintheorem}
Let $G$ be any graph containing no $3$-cycles or $4$-cycles.  Then $\Hom_d^\Cube(G)=(0)$ for all $d\ge 2$.
\end{Theorem}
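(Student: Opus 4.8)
\emph{Sketch of a proof.} The plan is to pass to the universal cover of $G$ and reduce the statement to the vanishing of the homology of a free group in degrees $\ge 2$. We may assume $G$ is connected. Since $G$ is a graph, $\pi:=\pi_1(G)$ is a free group; let $p\colon\tilde G\to G$ be its universal covering graph, which is a tree on which $\pi$ acts freely by deck transformations. The hypothesis that $G$ contains no $3$-cycles and no $4$-cycles enters in exactly one place, the \emph{lifting lemma}: every singular $n$-cube $\sigma\colon Q_n\to G$ admits a lift $\tilde\sigma\colon Q_n\to\tilde G$ with $p\circ\tilde\sigma=\sigma$. To prove it, observe that the normal closure in $\pi_1(Q_n)$ of the boundary $4$-cycles of the $2$-faces of the cube is all of $\pi_1(Q_n)$, since attaching the corresponding $2$-cells yields the simply connected $2$-skeleton of the solid $n$-cube; now $\sigma$ carries each such boundary $4$-cycle to a closed walk of length at most $4$ in $G$, and in a graph with no $3$- or $4$-cycles every closed walk of length $\le4$ backtracks and is therefore null-homotopic. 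Hence $\sigma_\ast\pi_1(Q_n)=1$, so the lift exists; once a base vertex of $Q_n$ and a point of its fibre are fixed it is unique, and the lifts of $\sigma$ are permuted simply transitively by $\pi$.

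Next I would package this at the chain level. Lifting commutes with the cubical face maps and preserves degeneracy (both are local conditions on $Q_n$), so, writing $\bar C^\Cube_\ast(-)$ for the cubical chain complex with degenerate cubes factored out, the lifting lemma produces an isomorphism of chain complexes
\begin{equation*}
\bar C^\Cube_\ast(G)\ \cong\ \ZZ\otimes_{\ZZ[\pi]}\bar C^\Cube_\ast(\tilde G),
\end{equation*}
in which $\bar C^\Cube_\ast(\tilde G)$ is a complex of \emph{free} $\ZZ[\pi]$-modules: a deck transformation fixing a singular cube of $\tilde G$ fixes one of its vertices and so is trivial, hence $\pi$ acts freely on the singular cubes of $\tilde G$. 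Consequently $\Hom^\Cube_d(G)=H_d\!\left(\ZZ\otimes_{\ZZ[\pi]}\bar C^\Cube_\ast(\tilde G)\right)$ for every $d$.

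It then remains to show that $\bar C^\Cube_\ast(\tilde G)$ is a free resolution of $\ZZ$ over $\ZZ[\pi]$, that is, $\Hom^\Cube_0(\tilde G)=\ZZ$ (clear, $\tilde G$ being connected) and $\Hom^\Cube_d(\tilde G)=0$ for all $d\ge1$. A cubical cycle has finite support and hence lies in a finite subtree of $\tilde G$, so it is enough to prove $\Hom^\Cube_d(T)=0$ for every finite tree $T$ and $d\ge1$, which I would do by induction on $|V(T)|$. If $\ell$ is a leaf of $T$ with neighbour $m$, let $T':=T\setminus\{\ell\}$, let $f\colon T\to T'$ be the fold sending $\ell$ to $m$ and fixing every other vertex, and let $\iota\colon T'\hookrightarrow T$ be the inclusion; then $f\circ\iota=\mathrm{id}_{T'}$, and $\Phi\colon T\times Q_1\to T$ with $\Phi(-,0)=\iota\circ f$ and $\Phi(-,1)=\mathrm{id}_T$ is a discrete homotopy, whose associated prism operator --- sending $\sigma\colon Q_n\to T$ to $\Phi\circ(\sigma\times\mathrm{id})\colon Q_{n+1}\to T$ --- is a chain homotopy $\iota_\ast f_\ast\simeq\mathrm{id}$. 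Thus $\iota$ induces an isomorphism on $\Hom^\Cube_\ast$ and the induction closes. Granting the resolution, $\Hom^\Cube_\ast(G)=\mathrm{Tor}^{\ZZ[\pi]}_\ast(\ZZ,\ZZ)=\HO_\ast(\pi;\ZZ)$, and as $\pi$ is free this vanishes in all degrees $d\ge2$, so $\Hom^\Cube_d(G)=(0)$ for $d\ge2$. (Incidentally one also recovers $\Hom^\Cube_1(G)\cong\ZZ^{\,b_1(G)}$.)

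The part I expect to be the real work is making the covering-space dictionary rigorous on the level of chain complexes --- checking that face maps, degeneracies, and the deck action are simultaneously compatible with lifting, so that the displayed isomorphism and its freeness statement survive passage to the non-degenerate complex --- together with the prism/chain-homotopy bookkeeping for the leaf collapse, where degenerate cubes must be tracked and discarded with care. This is exactly the sort of manipulation of singular cubes and their subdivisions that the tool developed in this paper is built to carry out; that tool also gives a more computational proof of the theorem that bypasses covering spaces entirely.
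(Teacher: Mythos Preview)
Your argument is correct (modulo the routine verifications you flag), and it takes a genuinely different route from the paper. Both proofs begin with the same lifting lemma: every singular cube $Q_d\to G$ lifts to the universal covering tree $U(G)$, and this is exactly where the no-$3$- or $4$-cycle hypothesis is used. From there the paper proceeds \emph{analytically}: it embeds $U(G)$ in a metric tree $\overline{U(G)}$, takes weighted averages along geodesics to subdivide $\tilsig$ into a grid $Q_d^N$ of small subcubes, rounds back into $U(G)$, and projects to $G$. This produces a subdivision operator $S^d$ with image in $\CC^{(2)}_d(G)$, and an explicit chain homotopy $h_d$ between $S^d$ and the identity is built by a similar averaging construction in one higher dimension. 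The result then follows because $\CC^{(2)}(G)$ splits into edge complexes, each contractible. Your argument is instead \emph{algebraic}: the lifting lemma yields $\CC^\Cube_\ast(G)\cong R\otimes_{R[\pi]}\CC^\Cube_\ast(U(G))$ with $\pi=\pi_1(G)$ free, acyclicity of trees makes $\CC^\Cube_\ast(U(G))$ a free $R[\pi]$-resolution of $R$, and hence $\Hom^\Cube_\ast(G)\cong H_\ast(\pi;R)$, which vanishes above degree $1$. Your approach is cleaner and yields the structural identification $\Hom^\Cube_\ast(G)\cong H_\ast(\pi_1(G);R)$ as a bonus; the paper's approach is more hands-on but produces a subdivision tool and explicit homotopies that may be reusable in settings (e.g., partial Mayer--Vietoris arguments, as in Section~5) where your coinvariants shortcut does not apply. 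One small correction to your closing remark: the paper's proof does \emph{not} bypass covering spaces---the universal covering graph $U(G)$ and the lifting lemma are central to Section~4; what the paper avoids is the homological-algebra packaging via $\mathrm{Tor}$ and group homology.
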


Of course, in classical theory, where graphs are $1$-dimensional CW-complexes, all graphs have trivial homology in dimension $d\ge 2$.  
However, the discrete cubical homology of \cite{BCW} is notably different: for example, $4$-cycles are homologically 
trivial in all dimensions.  
Examples of graphs with non-vanishing homology in dimension $d\ge 2$ exist (e.g., \cite{BCW}, \cite{BGJW}), but can be challenging to construct and verify. 
In that spirit, we note that the following conjecture appears in \cite{BGJW}, and remains open.

\begin{Conjecture}\label{conjecture1}
For any graph $G$, there exists an integer $N$ such that $\Hom_d^\Cube(G)= (0)$ for all $d \ge N$.
\end{Conjecture}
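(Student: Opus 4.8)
\medskip
\emph{A possible line of attack on \ref{conjecture1}.} We do not know how to prove this conjecture; we record here the approach that seems to us most promising, and the point at which it breaks down. Known constructions of graphs with $\Hom_d^\Cube\neq(0)$ all use a number of vertices that grows with $d$ --- and since the complete multipartite graph $K_{2,\dots,2}$ with $k$ parts already has $2k$ vertices, a graph on $|V(G)|$ vertices contains no such subgraph with $k>|V(G)|/2$ --- so it seems reasonable to aim not for the bare existence of $N$ but for an explicit quantitative bound, say $\Hom_d^\Cube(G)=(0)$ for all $d\ge |V(G)|$ (perhaps even for $d\ge |V(G)|/2$), and to prove it by induction on $|V(G)|$. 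The inductive step should come from excision. If $\{v\}$ is a cut vertex then $G$ is a one-point union $G_1\cup G_2$ with contractible intersection, and a Mayer--Vietoris sequence would give $\Hom_d^\Cube(G)\cong\Hom_d^\Cube(G_1)\oplus\Hom_d^\Cube(G_2)$ for $d\ge 1$. The discrete subdivision tool developed in this paper is exactly the ingredient one expects to need to establish such a sequence, since subdivision is what allows a singular cube to be pushed onto one side of a separating vertex set. Iterating this (and treating separators that span an edge, where the intersection is again contractible, in the same way), and first folding away dominated vertices --- which preserves $\Hom_*^\Cube$ --- reduces the problem to $3$-connected stiff graphs, and thereby already disposes of every graph whose highly connected ``pieces'' are small.

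For the graphs that survive these reductions one needs a direct vanishing argument, and the natural one rests on a pigeonhole observation. If $\phi\colon Q_d\to G$ is a singular $d$-cube ($Q_d$ the $d$-dimensional cube graph) and $d$ exceeds the maximum degree of $G$, then among the $d$ coordinate edges of $Q_d$ at the basepoint $\mathbf 0$, two are sent by $\phi$ to the same edge of $G$ or are both collapsed; once $d>|V(G)|$ the same holds at every vertex of $Q_d$. One would like to convert this unavoidable redundancy of directions into a contracting chain homotopy $s\colon C_d^\Cube(G)\to C_{d+1}^\Cube(G)$, defined for all $d$ in the relevant range and satisfying $\partial s+s\partial=\id$; the existence of such an $s$ forces $\Hom_d^\Cube(G)=(0)$ there. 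Informally $s$ should ``unfold'' a repeated coordinate into a genuinely new one, building a prism between $\phi$ and a degenerate cube.

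I expect the construction of $s$ to be the crux, and the reason \ref{conjecture1} remains open. The pair of parallel directions can vary from vertex to vertex of $Q_d$, so there is no evident globally consistent choice; and one must still check that the resulting operator descends to the quotient by degenerate cubes and respects orientations, neither of which the naive prism does. A cleaner route might bypass the explicit homotopy: prove first that the discrete homotopy groups of \cite{BKLW}, \cite{BBLL} vanish above dimension $O(|V(G)|)$, then transport this to homology by a discrete Hurewicz-type theorem after reducing to the simply connected case --- but the vanishing of those higher discrete homotopy groups is itself unknown, so this only relocates the difficulty. A more structural attack would be to realise $\Hom_*^\Cube(G)$ as the homology of the geometric realisation of the cubical set of all singular cubes of $G$, and then to show that this space deformation retracts onto a subcomplex of dimension $O(|V(G)|)$; from that viewpoint the identification, in the last section of this paper, of the covering-induced homology with the singular homology of a $2$-dimensional cell complex is the first and lowest-dimensional instance of the phenomenon one would want.
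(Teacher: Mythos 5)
There is no proof to compare against: in the paper this statement is \ref{conjecture1}, recorded explicitly as an open conjecture from \cite{BGJW}, and nothing in the paper resolves it. Your text is likewise not a proof and says so itself, so the only substantive question is whether the strategy you sketch is sound as far as it goes. The main step you treat as routine is not available in this theory. For a cut vertex $v$ with $G=G_1\cup G_2$ and $G_1\cap G_2=\{v\}$, the chain groups do not split: a singular cube can straddle $v$ (e.g.\ the $2$-cube $(a,v,v,b)$ with $a\in G_1$, $b\in G_2$ adjacent to $v$), so one cannot write $\CC_d(G)=\CC_d(G_1)+\CC_d(G_2)$ and a Mayer--Vietoris sequence does not come for free. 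The paper itself only ever obtains a truncated Mayer--Vietoris segment, in Section 5, and only because \ref{stretch} bounds the diameter of the image of a $d$-cube, which forces the splitting in a limited range of degrees; and the subdivision operator that you invoke to ``push a cube onto one side'' is constructed only for graphs without $3$- and $4$-cycles, since it rests on lifting singular cubes to the universal covering tree (\ref{univ-cover}(3)), which fails in the presence of short cycles. Worse, Section 6 shows that excision-type reductions are genuinely false in general: for $G=Q_3$ with $\kK$ the covering by its six quadrilateral faces, $\Hom_2^\Cube(\CC^\kK(G))=R$ while $\Hom_2^\Cube(G)=(0)$, so restricting attention to cubes with small image can change the homology. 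Any induction of the kind you propose has to confront exactly this obstruction.

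The second half of your sketch locates the real difficulty correctly --- converting the pigeonhole redundancy of directions (for $d$ larger than the maximum degree, or than $|V(G)|$) into a chain homotopy $s$ with $\partial s+s\partial=\id$ that is globally consistent, descends modulo degenerate cubes, and respects signs --- but no construction is offered, and the alternative routes you mention (a discrete Hurewicz theorem, or a low-dimensional model for the realization of the singular cubical set) are, as you note, themselves unproved and at least as hard. So the proposal contains no step that advances the conjecture beyond its status in the paper; if you pursue it, the cut-vertex Mayer--Vietoris claim is the first thing that needs an actual proof rather than an appeal to the classical analogue, and \ref{prop-covering} together with the $Q_3$ example is the place to test any candidate argument.
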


For arbitrarily large $d$, it is not obvious that there should exist graphs with non-vanishing $d$-homology.  
Proposition 5.3 of \cite{BCW} constructs an infinite sequence of graphs 
$\{G_d\}_{d \ge 1} $, such that 
\begin{eqnarray}
\label{recur}
\Hom_{d+1}^\Cube(G_{d+1})= \Hom_{d}^\Cube(G_{d}) \textrm{ for all $d\ge 1$}.
\end{eqnarray}
For appropriately chosen $G_1$ this yields a sequence of graphs with non-vanishing $d$-homology, for arbitrarily large $d$. 
However, the argument in \cite{BCW} is not sufficient to prove \Ref{recur} in full generality, since it relies on a discrete version of the Mayer-Vietoris sequence whose 
hypotheses are not satisfied for $d>1$.  In the present paper we will show that a small modification of the definition of $G_d$ 
in \cite{BCW} makes the argument correct, thus yielding a (different) sequence of graphs with non-vanishing homology in arbitrarily high dimension.

Our principal tool in proving \Ref{maintheorem} is a subdivision map that allows computation of homology to be restricted to ``small'' singular cubes. 
This approach is standard
in classical treatments of singular homology, see e.g., \cite{Massey91}, but for the discrete cubical case 
the details are significantly different and new techniques are required. 

The paper is organized as follows. Section 2 reviews the basic definitions of discrete cubical homology for graphs, following \cite{BCW}.  
Section 3 defines the subdivision map and proves the above mentioned conjecture when $G = \ZZ_5$.  
In fact the argument proves the same result for any cycle $\ZZ_n$ with $n\ge 5$.  
Section 4 introduces the machinery necessary to extend the proof for $G = \ZZ_5$ to arbitrary graphs without $3$-cycles and $4$-cycles. 
The key step is to show that every singular cube with codomain $G$ can be lifted to the universal covering graph of $G$. Once this has been established, 
we show how the constructions used in Section 3 extend to the general case.  In Section 5 we show how to modify the construction in
\cite{BCW} to obtain an infinite sequence of graphs $\{G_d\}_{d \ge 1} $ such that $G_d$ has 
non-vanishing homology in dimension $d$.
Section 6 concludes with general remarks about issues involved in generalizing the results in this paper to arbitrary graphs.

Since we will be discussing and comparing several different homology theories for graphs, it may be helpful to clarify our terminology in advance. Our primary focus is on the {\it discrete singular cubical homology} of a graph $G$, defined in Section 2). It will be denoted by 
$\Hom^\Cube(G)$ and occasionally called, simply, the
{\it discrete cubical homology}. If $X$ is any topological space, one can construct a {\it singular cubical homology} (e.g.,
\cite{Massey91}) and a {\it singular simplicial homology}
(e.g., \cite{Hat},\cite{Mu}). Since these are equal under assumptions relevant to this paper, they will both be called
the {\it (ordinary) singular homology} 
of $X$ and denoted by
$\Hom^\Sing(X)$. We will also have occasion to use the
fact that $\Hom^\Sing(X)\approx \Hom^\Cell(X)$, the 
{\it cellular homology} of $X$, when $X$ is a CW-complex.

\section{Background: discrete cubical homology of graphs}

We will briefly review the definitions relevant to this paper, referring the reader to \cite{BGJW} for more details and examples, and also to
 \cite{Diestel00} for graph theory definitions and terminology. For any positive integer $n$, let  
$[n] := \{1,\dots,n\}$. Throughout the paper, all homology computations will be done over a commutative ring with identity, denoted $R.$

For $d \geq 1$, the \textit{discrete $d$-cube $Q_d$} is the graph with vertex set $$V(Q_d)=\big\{(a_1,\dots, a_d)\,\big|\,a_i \in \{0,1\}, 1 \le i \le d\big\},$$  
and edge set $E(Q_d)$ consisting of those pairs of vertices $\{a,b\}$ differing in exactly one position. 
By convention,  $Q_0$ is the $1$-vertex graph with no edges.

If $G$ and $H$ are simple graphs, i.e. undirected graphs without loops or multiple edges, a \textit{graph homomorphism} (or \textit{graph map}) $\sigma: G \longrightarrow H$ is a 
map from $V(G)$ to $V(H)$ such that, if $\{a,b\}\in E(G)$ then either $\sigma(a)=\sigma(b)$ or $\{\sigma(a),\sigma(b)\}\in E(H)$.
A graph homomorphism $\sigma: Q_d \longrightarrow G$ is
called a \textit{singular} $d$-cube on $G$.

For each 
$d \geq 0$, let $\LL^\Cube_d(G)$ be the free $R$-module generated by all singular 
$d$-cubes on $G$.
For $d \geq 1$ and each $i \in [d]$, we define two face maps $f_i^+$ and $f_i^-$ from 
$\LL^\Cube_d(G)$ to $\LL^\Cube_{d-1}(G)$ such that, 
for $\sigma \in \LL^\Cube_d(G)$ and $(a_1,\dots,a_{d-1}) \in Q_{d-1}$:
\begin{eqnarray*}
  f_{i}^{+}\sigma(a_1,\dots,a_{d-1})&:=&\sigma(a_1,\dots,a_{i-1},1,a_i,\dots,a_{d-1}),  \\
  f_{i}^{-}\sigma(a_1,\dots,a_{d-1})&:=& \sigma(a_1,\dots,a_{i-1},0,a_i,\dots,a_{d-1}).
\end{eqnarray*}
For $d \geq 1$, a singular $d$-cube $\sigma$ is 
called \textit{degenerate} 
if $\sigma$ does not depend on at least one of its variables, that is, $f_{i}^{+}\sigma = f_{i}^{-}\sigma$, for some $i \in [d]$. Otherwise, 
$\sigma$ is called \textit{non-degenerate}.  By definition every
$0$-cube is non-degenerate.

For each $d\geq 0$, let $\DD^\Cube_d(G)$ be the 
$R$-submodule of 
$\LL^\Cube_d(G)$ generated by all degenerate singular $d$-cubes, 
and let $\CC^\Cube_d(G) = \LL^\Cube_d(G)/\DD^\Cube_d(G)$, 
whose elements are called $d$-chains.
Clearly, the cosets of non-degenerate $d$-cubes freely generate $\CC^\Cube_d(G)$.

Furthermore, for each $d\geq 1$, define the boundary operator 
\[
\partial_d^\Cube :\LL^\Cube_d(G) \longrightarrow \LL^\Cube_{d-1}(G)
\]
such that, for each singular $d$-cube $\sigma$,
\[
\partial_d^\Cube(\sigma) = \sum_{i=1}^d (-1)^i \big( f_i^-\sigma - f_i^+\sigma \big)
\]
and extend linearly to all chains in $\LL^\Cube_d(G)$.
When there is no danger of confusion, we will 
abbreviate $\partial^\Cube_d$ as $\partial_d$. 
If one sets $\LL^\Cube_{-1} (G) =\DD^\Cube_{-1} (G)= (0)$ then one can define
$\partial_0^\Cube$ as the trivial map from $\LL_0^\Cube(G)$ to
$\LL_{-1}^\Cube(G)$.

It is easy to check that, for $d\geq 0$,
$\partial_d [\DD^\Cube_d(G)] \subseteq \DD^\Cube_{d-1}(G)$ 
and  $\partial_{d} \partial_{d+1} \sigma= 0$ 
(see \cite{BCW}). 
Hence,  using the same notation, we may define a boundary operator
$\partial_d: \CC^\Cube_d(G) \longrightarrow \CC^\Cube_{d-1}(G)$,
and  $\CC^\Cube(G) = \{\CC_d^\Cube(G),\partial_d\}_{d\ge 0}$ is a 
chain complex of free $R$-modules. 

\begin{Definition}
  For $d \geq 0$, denote by $\Hom^\Cube_d(G)$ the $d$\textsuperscript{th} 
  homology group of the chain complex $\CC^\Cube(G)$. In other words, 
  $\Hom^\Cube_d(G) := \Ker\,\partial_d/\Image\,\partial_{d+1}$.
\end{Definition}

We denote singular $d$-cubes $\sigma: Q_d \to G$ by sequences of length $2^d$, where
the $i$\textsuperscript{th} term is the value of $\sigma$ on the $i$\textsuperscript{th} vertex, and the vertices of $Q_d$ are indexed in colexicographic order. For example, if 
$G$ is a path with vertices labeled $\{1,2,3\}$, then the sequence
$(1,2,2,1,2,3,3,2)$ represents the singular $3$-cube with labels as illustrated below.
\begin{center}
\begin{tikzpicture}[scale=.9]
   \Vertex[x=0 ,y=0,L=$000$]{1}
      \Vertex[x=2 ,y=0,L=$100$]{2}
         \Vertex[x=1 ,y=1,L=$010$]{3}
            \Vertex[x=3 ,y=1,L=$110$]{4}
               \Vertex[x=0 ,y=2,L=$001$]{5}
                  \Vertex[x=2 ,y=2,L=$101$]{6}
                     \Vertex[x=1 ,y=3,L=$011$]{7}
                        \Vertex[x=3 ,y=3,L=$111$]{8}
      \Edge(1)(2)  \Edge(1)(3)  \Edge(2)(4)
      \Edge(3)(4)  \Edge(5)(6)\Edge(5)(7)\Edge(6)(8)\Edge(7)(8)
      \Edge(1)(5)\Edge(2)(6)\Edge(3)(7)\Edge(4)(8)
      \node [color=blue] at ([shift={(-.4,.4)}]1) {1};
         \node  [color=blue] at ([shift={(-.4,.4)}]2) {2};
            \node  [color=blue] at ([shift={(-.4,.4)}]3) {2};
               \node  [color=blue] at ([shift={(-.4,.4)}]4) {1};
                  \node  [color=blue] at ([shift={(-.4,.4)}]5) {2};
                     \node  [color=blue] at ([shift={(-.4,.4)}]6) {3};
                        \node  [color=blue] at ([shift={(-.4,.4)}]7) {3};
                           \node  [color=blue] at ([shift={(-.4,.4)}]8) {2};
\end{tikzpicture}
\end{center}
By convention, we will represent each coset in $\CC_d^\Cube(G)$ by the unique coset representative in which all terms are non-degenerate.

\section{Subdivision Map for the Pentagon}\label{section-pentagon}
In this section we prove that $\Hom^\Cube_d(G) = (0)$ for all $d \geq 2$ when $G=\ZZ_5$. The key step is to reduce the computation to
considering singular cubes $\sigma: Q_d \to \ZZ_5$ whose image has size at most $2$. 
\begin{Definition}For any graph $G$, 
\[
\CC^{(2)}_d(G) = \{ \sigma \in \CC_d(G) \;\big|\; |\textrm{Im}(\sigma)| \le 2 \}.
\]
\end{Definition}

\begin{Theorem}[Subdivision for $\ZZ_5$]\label{subdiv-pentagon} 
For all $d\ge 1$, there exists a map $S^d: \CC_d(\ZZ_5) \to \CC_d(\ZZ_5)$   such that

\begin{enumerate}
\item[(1)] $S^d$ is a chain map.
\item[(2)] $\textrm{Im}(S^d) \subseteq \CC^{(2)}_d(\ZZ_5) $.
\item[(3)] There exists maps $h_{d-1}: \CC_{d-1}(\ZZ_5) \to \CC_d(\ZZ_5)$ and
$h_d: \CC_d(\ZZ_5) \to \CC_{d+1}(\ZZ_5)$ such that 
\begin{enumerate}
\item[(i) ]for all $\sigma \in \CC_d(\ZZ_5)$, 
$
\sigma - S^d(\sigma) = h_{d-1} \partial_d(\sigma) + \partial_{d+1} h_d(\sigma), and
$
\item[(ii)] for all $\sigma \in \CC^{(2)}_d(\ZZ_5)$, $h_d(\sigma) \in \CC^{(2)}_{d+1}(\ZZ_5)$.
\end{enumerate}
\end{enumerate}
\end{Theorem}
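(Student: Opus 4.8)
The plan is to imitate the classical argument that the subcomplex of ``small'' singular cubes is a deformation retract of the full singular cubical complex, but two features of the discrete setting force new ideas: a graph has no midpoints of edges, so there is no direct analogue of barycentric (or cubical) subdivision of the \emph{domain}; and a singular cube in $\ZZ_5$ whose image is all of $\ZZ_5$ has no ``outermost'' value that can be pushed inward. I would build $S^d$ from two ingredients, \emph{folds} and \emph{discrete prisms}, and handle the wrap-around by first passing to the universal cover.

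\emph{Folds and prisms.} Call a graph endomorphism $\rho$ of an interval of $\ZZ$, or of a subpath of $\ZZ_5$, an \emph{elementary fold} if it fixes every vertex except one extreme vertex $v$ of its domain, which it maps to the unique neighbour of $v$. Such a $\rho$ is a graph homomorphism, post-composition with $\rho$ never enlarges the image of a singular cube, and $\rho$ is discretely homotopic to the identity: the homotopy is the singular $(d+1)$-cube $H_\sigma$ obtained by thickening $\sigma$ in one new direction and applying $\rho$ on the far face, which is a valid graph map precisely because $v$ and $\rho(v)$ are adjacent. Reading off the faces of $H_\sigma$, exactly as one computes the boundary of a prism in ordinary singular homology, yields an identity $\sigma-\rho_\#\sigma=g_{d-1}\partial_d\sigma+\partial_{d+1}g_d\sigma$ with $g$ assembled from these prisms. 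Iterating elementary folds inward from both ends shrinks the image of any singular cube whose image is a \emph{proper} subpath of $\ZZ_5$ to size at most $2$; telescoping the prisms gives the chain homotopy, and (3)(ii) holds because folds do not enlarge images and an already-small cube is left fixed.

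\emph{The cover.} A singular $d$-cube $\sigma\colon Q_d\to\ZZ_5$ whose image is all of $\ZZ_5$ is handled by lifting it through the covering $p\colon\PP\to\ZZ_5$ of $\ZZ_5$ by the two-way-infinite path $\PP$. Such a lift exists because $Q_d$ is connected and its cycle space is generated by $4$-cycles $C$: the image $\sigma(C)$ is a closed walk of length at most $4$ in $\ZZ_5$, which, since $\ZZ_5$ has no $3$- or $4$-cycle, must backtrack and hence lifts to a closed walk in $\PP$; so the lift built over a spanning tree of $Q_d$ is consistent around every cycle, and it is unique once $\tilde\sigma(0,\dots,0)$ is pinned to the fundamental domain $\{0,\dots,4\}$. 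Since $\tilde\sigma$ now has image a finite interval of $\PP$, the fold-and-prism construction applies without restriction, and $p_\#$ of the outcome lies in $\CC^{(2)}_d(\ZZ_5)$. Defining $S^d$, $h_{d-1}$, $h_d$ by ``lift, fold, project'' gives (1), (2) and (3)(i), \emph{provided} the recipe is arranged to commute with the face maps.

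That proviso is the main obstacle. Folding each cube relative to the minimum of its own image is not a chain map: a face of a cube need not attain that minimum, and pushing a fold from $\PP$ down to $\ZZ_5$ can land in a different edge, because the chosen lifts of $\sigma$ and of its faces differ by deck translations — multiples of $5$ — and that is exactly enough to change the answer. The way I would force compatibility is to construct $S^d$ and the homotopy $h$ \emph{together}, by induction on $d$ in acyclic-models style, with the identity cube on $Q_d$ as universal model and the small-cube subcomplex of $Q_d$ supplying, via the explicit folds and prisms above, the acyclicity that lets the construction be extended one dimension at a time while remaining natural. Granting such an $S^d$, properties (1), (2), (3)(i) and (3)(ii) follow from the prism identity together with the observations that folds and covering projections never enlarge images, and that $S^d$ fixes cubes that are already small.
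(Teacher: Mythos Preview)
Your lifting step and the observation that the cycle space of $Q_d$ is generated by $4$-cycles are correct and match the paper. The gap is exactly where you flag it: turning the fold--prism idea into a chain map. Your proposed fix, an acyclic-models induction with $Q_d$ as universal model and the small-cube subcomplex as target, does not go through, because $\CC^{(2)}(Q_d)$ is \emph{not} acyclic for $d\ge 2$. Already for $d=2$: in degrees $\ge 1$ the complex $\CC^{(2)}(Q_2)$ splits as a direct sum over the four edges of $Q_2$, each summand the chain complex of a single edge, so every $2$-boundary in $\CC^{(2)}(Q_2)$ is supported on a single edge; but the boundary of the identity $2$-cube $\iota_2\colon Q_2\to Q_2$ is the full $4$-cycle, hence a nontrivial class in $H_1(\CC^{(2)}(Q_2))\cong R$. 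There is therefore no $S^2(\iota_2)\in\CC^{(2)}_2(Q_2)$ with $\partial S^2(\iota_2)=\partial\iota_2$, and the naturality step fails at the first interesting dimension. (Concretely: $Q_2$ admits no elementary fold, and since $Q_2$ itself has a $4$-cycle, the covering trick does not help on the model.) Even if you abandon naturality and try to build $S$ and $h$ ad hoc in $\ZZ_5$, the acyclicity needed to choose $h_d$ so that (3)(ii) holds is again unavailable.

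The paper's construction is different in kind: rather than retracting the \emph{codomain} by folds, it subdivides the \emph{domain}. After lifting $\sigma$ to $\tilde\sigma\colon Q_d\to\ZZ$, it overlays an $N\times\cdots\times N$ grid $Q_d^N$ on $Q_d$, labels interior grid points by the multilinear $\QQ$-average of the corner values of $\tilde\sigma$, then applies $\lfloor\cdot\rfloor$ and reduces mod $5$; $S^N(\sigma)$ is the signed sum of the $N^d$ small subcubes. Because the averaging formula restricted to a face of $Q_d^N$ is literally the averaging formula for $f_i^\pm\sigma$, the map $S^N$ is a chain map \emph{by construction}, with no appeal to acyclicity. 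A direct estimate shows adjacent grid labels differ by at most $1/N$ in $\QQ$, so once $N\ge d$ every small subcube has image of size $\le 2$, giving (2). The homotopy $h_d$ is built the same way, as a labelled $(d{+}1)$-dimensional grid linearly interpolating between $S^d(\sigma)$ on the bottom and a stretched copy of $\sigma$ on the top; property (3)(ii) then falls out because if $\tilde\sigma$ already takes values in $\{t,t{+}1\}$, every average lies in $[t,t{+}1]$ and rounds back to $t$ or $t{+}1$.
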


Before proving \Ref{subdiv-pentagon} we will show that it implies one of our principal results.

\begin{Corollary}\label{corr-pentagon}
For all $d\ge 1$, $\Hom^\Cube_d(\CC(\ZZ_5)) =\Hom^\Cube_d(\CC^{(2)}(\ZZ_5))$.
\end{Corollary}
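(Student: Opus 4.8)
The plan is to recognize the statement as the standard consequence of having a subdivision operator that is chain homotopic to the identity and lands in a subcomplex, and to deduce the isomorphism by a short diagram chase. First I would record that $\CC^{(2)}(\ZZ_5)=\{\CC^{(2)}_d(\ZZ_5),\partial_d\}_{d\ge 0}$ is genuinely a subcomplex of $\CC^\Cube(\ZZ_5)$: if $\sigma$ is a singular $d$-cube with $|\mathrm{Im}(\sigma)|\le 2$, then every face $f_i^{\pm}\sigma$ also has image of size at most $2$, so $\partial_d$ restricts to $\CC^{(2)}_d(\ZZ_5)\to\CC^{(2)}_{d-1}(\ZZ_5)$. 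Write $\iota\colon\CC^{(2)}(\ZZ_5)\hookrightarrow\CC^\Cube(\ZZ_5)$ for the inclusion chain map; the claim is exactly that $\iota_*$ is an isomorphism on homology in every degree $d\ge 1$.

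Next I would use \Ref{subdiv-pentagon} to produce maps inverse to $\iota_*$. By part~(1), $S=\{S^d\}$ is a chain map on $\CC^\Cube(\ZZ_5)$, and by part~(2) it carries $\CC^{(2)}_d(\ZZ_5)$ into itself; hence $S$ also restricts to a chain map $\bar S\colon\CC^{(2)}(\ZZ_5)\to\CC^{(2)}(\ZZ_5)$, and $\iota\circ\bar S = S\circ\iota$. Part~(3)(i) says $\id - S = h_{d-1}\partial_d + \partial_{d+1}h_d$ on $\CC_d^\Cube(\ZZ_5)$, i.e. $S$ is chain homotopic to the identity on $\CC^\Cube(\ZZ_5)$, so $S_* = \id$ on $\Hom^\Cube_d(\CC(\ZZ_5))$. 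The point of part~(3)(ii) is that this same homotopy restricts: for $\sigma\in\CC^{(2)}_d(\ZZ_5)$ we have $\partial_d\sigma\in\CC^{(2)}_{d-1}(\ZZ_5)$, whence $h_{d-1}\partial_d\sigma\in\CC^{(2)}_d(\ZZ_5)$, and $h_d\sigma\in\CC^{(2)}_{d+1}(\ZZ_5)$, whence $\partial_{d+1}h_d\sigma\in\CC^{(2)}_d(\ZZ_5)$; so the $h_d$ also restrict to $\CC^{(2)}(\ZZ_5)$ and exhibit $\bar S$ as chain homotopic to the identity of $\CC^{(2)}(\ZZ_5)$, giving $\bar S_* = \id$ on $\Hom^\Cube_d(\CC^{(2)}(\ZZ_5))$.

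Finally I would assemble these: regarding $S$ as the composite $\iota\circ \widetilde S$ where $\widetilde S\colon\CC^\Cube(\ZZ_5)\to\CC^{(2)}(\ZZ_5)$ is $S$ with restricted codomain (legitimate by part~(2)), we get $\iota_*\circ\widetilde S_* = S_* = \id$ on $\Hom^\Cube_d(\CC(\ZZ_5))$, while $\widetilde S_*\circ\iota_* = \bar S_* = \id$ on $\Hom^\Cube_d(\CC^{(2)}(\ZZ_5))$. Hence $\iota_*$ is an isomorphism for every $d\ge 1$, which is the assertion of \Ref{corr-pentagon}. I do not expect any real obstacle here: the content is entirely in \Ref{subdiv-pentagon}, and the only thing to be careful about is the bookkeeping that makes all the relevant maps restrict to the subcomplex $\CC^{(2)}(\ZZ_5)$ in a compatible way, which is precisely what conditions~(2) and~(3)(ii) are designed to guarantee.
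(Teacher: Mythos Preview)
Your proposal is correct and follows essentially the same approach as the paper: both arguments use parts (1)--(3) of \Ref{subdiv-pentagon} to show that the inclusion $\iota\colon\CC^{(2)}(\ZZ_5)\hookrightarrow\CC(\ZZ_5)$ and the subdivision map (with restricted codomain) induce mutually inverse maps on homology, via the chain homotopy $h$ and its restriction to $\CC^{(2)}$. Your explicit verification that $\CC^{(2)}(\ZZ_5)$ is closed under $\partial$ is a nice addition that the paper leaves implicit.
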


\begin{proof}
We may represent the maps defined in \Ref{subdiv-pentagon} by the following diagram.
\[
\xymatrix{
\cdots \ar[r]_>>>>>\partial & C_{d+1}  \ar[r]_>>>>>>{\partial} \ar[d]^{\color{blue}S} \ar@/_2.5pc/[dd]_>>>>>>{ \color{blue} I} & 
C_d \ar[r]_>>>>>{\partial}\ar[d]^{\color{blue}S}\ar@[red][ddl]_>>>>>>>>>>>>>>>>>>>{\color{red}h_d} \ar@/_2.5pc/[dd]_>>>>>>{\color{blue}I}&
C_{d-1} \ar[d]^{\color{blue}S}  \ar[r]_>>>>>\partial \ar@[red][ddl]_>>>>>>>>>>>>>>>>>>>{\color{red}h_{d-1}}\ar@/_2.5pc/[dd]_>>>>>>>{\color{blue}I}& \cdots\\
\cdots \ar[r]_>>>>>\partial & C^{(2)}_{d+1}  \ar[r]_>>>>>{\partial} \ar[d]^{\color{blue}i}  & 
C^{(2)}_d \ar[r]_>>>>{\partial}\ar[d]^{\color{blue}i} &
C^{(2)}_{d-1} \ar[d]^{\color{blue}i}  \ar[r]_>>>>>\partial & \cdots\\
\cdots \ar[r]_>>>>>\partial & C_{d+1}  \ar[r]_>>>>>{\partial} & 
C_d \ar[r]_>>>>>{\partial}  &
C_{d-1} \ar[r]_>>>>>\partial & \cdots
}
\] Property (3i) defines a chain homotopy between the identity map $I$ and $iS$ where $i$ denotes the inclusion map from 
$\CC^{(2)}$ to $\CC$. Since $S$ and $i$ are chain maps, they induce maps $S_*: H(\CC) \to H(\CC^{(2)})$ and $i_*: H(\CC^{(2)}) \to H(\CC)$ on homology, and chain homotopy implies  $I_* = (iS)_* = i_* S_*$. Since $S_*$ has a left inverse, it is injective. 

By property (3ii) we can also regard $h_d$ as a map from $\CC^{(2)}_d$ to $\CC^{(2)}_{d+1}$, and we obtain the following similar diagram.
\[
\xymatrix{
\cdots \ar[r]_>>>>>\partial & C^{(2)}_{d+1}  \ar[r]_>>>>>>{\partial} \ar[d]^{\color{blue}i} \ar@/_2.5pc/[dd]_>>>>>>{ \color{blue} I} & 
C^{(2)}_d \ar[r]_>>>>>{\partial}\ar[d]^{\color{blue}i}\ar@[red][ddl]_>>>>>>>>>>>>>>>>>>>{\color{red}h_d} \ar@/_2.5pc/[dd]_>>>>>>{\color{blue}I}&
C^{(2)}_{d-1} \ar[d]^{\color{blue}i}  \ar[r]_>>>>>\partial \ar@[red][ddl]_>>>>>>>>>>>>>>>>>>>{\color{red}h_{d-1}}\ar@/_2.5pc/[dd]_>>>>>>{\color{blue}I}& \cdots\\
\cdots \ar[r]_>>>>>\partial & C_{d+1}  \ar[r]_>>>>>{\partial} \ar[d]^{\color{blue}S}  & 
C_d \ar[r]_>>>>{\partial}\ar[d]^{\color{blue}S} &
C_{d-1} \ar[d]^{\color{blue}S}  \ar[r]_>>>>>\partial & \cdots\\
\cdots \ar[r]_>>>>>\partial & C^{(2)}_{d+1}  \ar[r]_>>>>>{\partial} & 
C^{(2)}_d \ar[r]_>>>>>{\partial}  &
C^{(2)}_{d-1} \ar[r]_>>>>>\partial & \cdots
}
\]
This describes a homotopy between $I: \CC^{(2)}\to \CC^{(2)}$ and $Si: \CC^{(2)} \to \CC^{(2)}$. Hence, for the induced maps on homology, we have $I_* = (Si)_* = S_* i_*$ which  
proves that $S_*$ has a right inverse. Hence $S_*$ is surjective, and we conclude that it is an isomorphism of homology. 

\end{proof}

\begin{proof}[Proof of \Ref{subdiv-pentagon}]
We will first construct, for any integer $N\ge 2$, an operator $S^N: \CC_d(\ZZ_5) \to \CC_d(\ZZ_5)$ which maps a singular cube $\sigma\in \CC_d(\ZZ_5) $ to a sum of $N^d$ singular cubes. Then, in order to ensure property (3i) of \Ref{subdiv-pentagon} we will specialize to $N=d$. 

The construction proceeds in several steps, which we will first illustrate by a small example with $N=3, d=2$. Consider the singular $2$-cube $\sigma = (1,2,2,3) \in \CC_2(\ZZ_5)$  illustrated by the following picture.
\begin{center}
\begin{tikzpicture}[scale=.8]
   \Vertex[x=0 ,y=0,L=$00$]{1}
      \Vertex[x=2 ,y=0,L=$10$]{2}
               \Vertex[x=0 ,y=2,L=$01$]{3}
                  \Vertex[x=2 ,y=2,L=$11$]{4}
      \Edge(1)(2)  \Edge(1)(3)  \Edge(2)(4)
      \Edge(3)(4)  
      \node [color=blue] at ([shift={(-.5,.5)}]1) {1};
         \node  [color=blue] at ([shift={(-.5,.5)}]2) {2};
            \node  [color=blue] at ([shift={(-.5,.5)}]3) {2};
               \node  [color=blue] at ([shift={(-.5,.5)}]4) {3};
               \node at (1,-.8) {\Large$\sigma$};
             \end{tikzpicture}
\end{center}
The steps in the construction of $S^3(\sigma)$ are as follows:

\begin{itemize}
\item[\sc Step 1:]
Lift $\sigma$ to a graph map $\tilsig: Q_2 \to \ZZ$. In this case, the above picture also 
represents $\tilsig$, but this may not be true in general.
\item[\sc Step 2:] Subdivide $Q_2$, creating a grid $Q_2^3$ with nine small subsquares. 
Extend $\tilsig$ to
a map $\tilsig^3: Q_2^3 \to \QQ$ by  computing weighted averages along lines.  
\item[\sc Step 3:] Round down, obtaining a map $\lfloor \tilsig^3 \rfloor: Q_2^3 \to \ZZ$, and then reduce mod $5$, obtaining a map $[\tilsig^3] : Q_2^3 \to \ZZ_5$. 
\end{itemize}
The results of steps 2 and 3, constructing $\tilsig^3$ and $[\tilsig^3]$, may be represented as follows:

\begin{center}
\begin{tikzpicture}[scale=1]
\SetGraphUnit{2}\GraphInit[vstyle=Hasse]
\def\shfta{.35}
\def\shftb{.28}


\Vertex[x=0 ,y=0,L=$00$]{1}
\Vertex[x=3 ,y=0,L=$10$]{2}
\Vertex[x=0 ,y=3,L=$01$]{3}
\Vertex[x=3 ,y=3,L=$11$]{4}
\Edge(1)(2) \Edge(1)(3) \Edge(2)(4) \Edge(3)(4)
\node [color=blue] at ([shift={(-\shfta,\shfta)}]1) {1};
         \node  [color=blue] at ([shift={(-\shfta,\shfta)}]2) {2};
            \node  [color=blue] at ([shift={(-\shfta,\shfta)}]3) {2};
               \node  [color=blue] at ([shift={(-\shfta,\shfta)}]4) {3};
               
\Vertex[x=5 ,y=0,L=$00$]{1}
\Vertex[x=8 ,y=0,L=$10$]{2}
\Vertex[x=5 ,y=3,L=$01$]{3}
\Vertex[x=8 ,y=3,L=$11$]{4}
\Edge(1)(2) \Edge(1)(3) \Edge(2)(4) \Edge(3)(4)
\node [color=blue] at ([shift={(-\shfta,\shfta)}]1) {1};
         \node  [color=blue] at ([shift={(-\shfta,\shfta)}]2) {2};
            \node  [color=blue] at ([shift={(-\shfta,\shfta)}]3) {2};
               \node  [color=blue] at ([shift={(-\shfta,\shfta)}]4) {3};

\foreach \x in {1,2, 6,7}
\foreach \y in {0,1,2,3}
{
\draw[fill=red] (\x,\y) circle (3pt);
}

\foreach \x in {0,3,5,8}
\foreach \y in {1,2}
{
\draw[fill=red] (\x,\y) circle (3pt);
}

\foreach \x in {0,5}
\foreach \y in {1,2}
{
 \draw[red] (\x,\y) -- (\x+3,\y); 
};

\foreach \x in {1,2,6,7}
{
 \draw[very thick,red] (\x,0) -- (\x,3); 
};

\node  [color=blue] at (1-\shftb,0+\shftb) {4/3};
\node  [color=blue] at (2-\shftb,0+\shftb) {5/3};
\node  [color=blue] at (0-\shftb,1+\shftb) {4/3};
\node  [color=blue] at (0-\shftb,2+\shftb) {5/3};
\node  [color=blue] at (1-\shftb,3+\shftb) {7/3};
\node  [color=blue] at (2-\shftb,3+\shftb) {8/3};
\node  [color=blue] at (3-\shftb,1+\shftb) {7/3};
\node  [color=blue] at (3-\shftb,2+\shftb) {8/3};
\node  [color=blue] at (1-\shftb,1+\shftb) {5/3};
\node  [color=blue] at (2-\shftb,1+\shftb) {2};
\node  [color=blue] at (1-\shftb,2+\shftb) {2};
\node  [color=blue] at (2-\shftb,2+\shftb) {7/3};
\node  [color=blue] at (6-\shftb,0+\shftb) {1};
\node  [color=blue] at (7-\shftb,0+\shftb) {1};
\node  [color=blue] at (5-\shftb,1+\shftb) {1};
\node  [color=blue] at (5-\shftb,2+\shftb) {1};
\node  [color=blue] at (6-\shftb,3+\shftb) {2};
\node  [color=blue] at (7-\shftb,3+\shftb) {2};
\node  [color=blue] at (8-\shftb,1+\shftb) {2};
\node  [color=blue] at (8-\shftb,2+\shftb) {2};
\node  [color=blue] at (6-\shftb,1+\shftb) {1};
\node  [color=blue] at (7-\shftb,1+\shftb) {2};
\node  [color=blue] at (6-\shftb,2+\shftb) {2};
\node  [color=blue] at (7-\shftb,2+\shftb) {2};

\node at (1.5,-.8) {\Large$\tilsig^3$};
\node at (6.5,-.8) {\Large$[\tilsig^3]$};

\end{tikzpicture}
\end{center}

\begin{itemize}
\item[\sc Step 4:] Finally, define $S^3(\sigma)$ to be the sum of the (non-degenerate) small subcubes
appearing in $[\tilsig^3]$. That is,
\[
S^3(\sigma) = 2(1,1,1,2) +  3(1,2,2,2)  +(2,2,2,3).
\]
\end{itemize}
Next we explain the construction of $S^N$ in general, and show that it satisfies property (1) of \Ref{subdiv-pentagon}.

\begin{Definition}[Discrete $N$-grid in dimension $d$] For any $d, N \ge 1$, let
\[
Q_d^N = \{(a_1,\dots, a_d) \;|\; a_i \in \{0,\dots,N\}\,\}.
\]
\end{Definition}

\begin{Lemma}[Lifting lemma]\label{lift-pentagon}
If  $\sigma: Q_d \to \ZZ_5$ is a singular $d$-cube, there exists a graph map
$\tilsig: Q_d \to \ZZ$ such that the diagram
\[
\xymatrix{
Q_d  \ar[r]^>>>>>>{\tilsig} \ar@/_1.5pc/[rr]^>>>>>>>{ \sigma} & 
\ZZ \ar[r]_>>>>>{} & \ZZ_5
}
\]
commutes.  This map is unique up to translation of its image.
\end{Lemma}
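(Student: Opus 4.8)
The plan is to lift $\sigma:Q_d\to\ZZ_5$ along the covering map $\pi:\ZZ\to\ZZ_5$ (reduction mod $5$) using a path-lifting argument adapted to the cube, exactly as one lifts a map from a simply-connected space in classical covering-space theory. First I would fix the base vertex $\mathbf 0=(0,\dots,0)\in Q_d$ and choose $\tilsig(\mathbf 0)$ to be any integer with $\tilsig(\mathbf 0)\equiv\sigma(\mathbf 0)\pmod 5$; the ``unique up to translation of its image'' clause will come from the fact that this is the only free choice, and that changing it replaces $\tilsig$ by $\tilsig+5k$. To define $\tilsig$ on an arbitrary vertex $a\in V(Q_d)$, pick any edge-path $\gamma$ in $Q_d$ from $\mathbf 0$ to $a$ and lift it step by step: having lifted up to a vertex $v$ with value $\tilsig(v)$, and given the next edge $\{v,w\}$ of $\gamma$, the value $\sigma(w)$ is either equal to $\sigma(v)$ or adjacent to it in $\ZZ_5$; in the first case set $\tilsig(w)=\tilsig(v)$, and in the second case set $\tilsig(w)$ to be the unique integer among $\tilsig(v)\pm1$ that reduces to $\sigma(w)$ mod $5$ (uniqueness here uses that $\ZZ_5$ has no $3$- or $4$-cycles — concretely, that in $\ZZ_5$ the two neighbors of $\sigma(v)$ are distinct and neither equals $\sigma(v)$, so the congruence class mod $5$ pins down the sign).

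The crux is \emph{well-definedness}: the value $\tilsig(a)$ must not depend on the chosen path $\gamma$. Since any two edge-paths in $Q_d$ between the same endpoints differ by a sequence of elementary moves — inserting/deleting a backtrack $v\to w\to v$, and swapping the two sides of a square face $\{v,v{+}e_i,v{+}e_i{+}e_j,v{+}e_j\}$ — it suffices to check that lifting is invariant under each such move. Backtracks are immediate from the construction. For a square face, I would argue that the lift of $\sigma$ restricted to that $4$-cycle closes up: walking around the square, the increments in $\tilsig$ lie in $\{-1,0,+1\}$ and their reductions mod $5$ must sum to $0$ in $\ZZ_5$; because the total integer increment around the loop is a sum of four terms each in $\{-1,0,1\}$, hence lies in $[-4,4]$, and it is $\equiv 0\pmod 5$, it must equal $0$. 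This is precisely the ``no short cycles'' phenomenon, although here $\ZZ_5$ is fixed so it is just an arithmetic check; the general-graph version of this argument is what Section 4 will need. Then $\tilsig$ is well defined on all vertices, and one checks it is a graph map: every edge $\{v,w\}$ of $Q_d$ lies on some path from $\mathbf 0$, so $\tilsig(w)\in\{\tilsig(v),\tilsig(v)\pm1\}$, i.e. $\tilsig(v)$ and $\tilsig(w)$ are equal or adjacent in $\ZZ$, and $\pi\circ\tilsig=\sigma$ by construction.

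The main obstacle is the square-face invariance step, and within it the verification that a lift of a singular cube, restricted to any $4$-cycle in $Q_d$, has trivial total winding in $\ZZ$. The bound ``$|$sum of four $\pm1$/$0$ terms$|\le 4<5$'' is what makes it work for $\ZZ_5$; I would state this as the one nontrivial inequality and present it carefully, since the analogous statement for an arbitrary graph without $3$- and $4$-cycles (needed later) is more delicate and presumably motivates the covering-graph machinery of Section 4. Finally, to handle uniqueness cleanly I would note that $Q_d$ is connected, so once $\tilsig(\mathbf 0)$ is chosen the rest is forced by the path-lifting recipe above; two lifts therefore differ by the constant $5(\tilsig(\mathbf 0)-\tilsig'(\mathbf 0))/5\in 5\ZZ$, which is exactly a translation of the image.
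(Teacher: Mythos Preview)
Your proposal is correct and follows the same path-lifting strategy as the paper: fix a basepoint lift, extend along edges via the $\pm1/0$ rule, and verify well-definedness by showing the lift is invariant under elementary moves on paths in $Q_d$ (the paper encodes paths as words in symbols $x,\bar x$ and phrases these moves as adjacent transpositions of distinct letters, which is exactly your square-face swap). The one cosmetic difference is in the square check itself: the paper leaves it as ``a small number of cases,'' whereas your arithmetic bound---four increments in $\{-1,0,1\}$ that sum to $0$ mod $5$ must sum to exactly $0$---is cleaner and makes explicit why $\ZZ_5$ (and not, say, $\ZZ_4$) admits such a lift.
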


\begin{proof}[Proof of \Ref{lift-pentagon}]
Choose $v_0 = (0,\dots,0)$ in $Q_d$ as a basepoint, and define $\tilsig(v_0)\in \ZZ$ to 
be the 
minimal positive representative of the residue class of 
$\sigma(v_0)\in \ZZ_5$. Define $\tilsig$ on 
all of $Q_d$ by extending along paths from $v_0$ by the following rules: if
$\tilsig(v) = k \in \ZZ$ has been defined, and $u$ is adjacent to $v$ in $Q_d$, then
\begin{equation}\label{rules}
\tilsig(u) = 
\begin{cases} 
k &\mbox{if } \sigma(u) = \sigma(v) \\
k+1&\mbox{if } \sigma(u) = \sigma(v)+ 1 \mod 5 \\
k-1 &\mbox{if } \sigma(u) = \sigma(v)-1 \mod 5 .\\
\end{cases}
\end{equation}
It is clear that if $\tilsig$ is well defined, then it is a graph map.  To show that it is well defined, it suffices to show that applying
\Ref{rules} iteratively defines $\tilsig$ unambiguously around any loop in $Q_d$. For completeness, we sketch a short proof of this fact, which is
probably well known.

Suppose $\gamma = (v_0, v_1, \dots, v_{2k} = v_0)$ is a loop of length $2k$ in $Q_d$. We may represent $\gamma$ as a loop in the Boolean lattice
${\bf 2}^d$, where $v_0$ corresponds to the empty set and steps consist of additions or deletions of elements in $[d]$.  If additions and deletions of elements $x\in [d]$ are encoded by $x$ and $\bar{x}$, respectively, then $\gamma$ may be represented by
a word in these symbols.  For example,  $w = 35\bar{3} 2 \bar{5} 3$ represents the path
\[
\emptyset \longrightarrow 3 \longrightarrow 35 \longrightarrow 5 \longrightarrow 25 \longrightarrow 2 \longrightarrow 23.
\]
A word $w$ represents a path in $Q_d$ if and only if for each $x \in [d]$, the occurrences of $x$ and $\bar{x}$ in $w$ form an alternating subsequence beginning with $x$. It represents a loop if an only if, for all $x$, the sequence also ends with $\bar{x}$. 
The property of representing a loop is preserved if we transpose symbols $w_i$ and $w_{i+1}$, where $w_i=x$ or $\bar{x}$, $w_{i+1} = y$ or $\bar{y}$, and $x \ne y$. Consequently, if $w$ represents a loop 
$\gamma(w)$ beginning and ending at $\emptyset$, we can transform $w$ 
by adjacent transpositions into a word of the form
$w^* = x \bar{x} y \bar{y}\cdots z \bar{z}$ with the same properties.  Further, if $\tilsig(w)$ is the result of applying $\tilsig$ to the loop $\gamma(w)$ in $Q_d$, it is straightforward to show (by examining a small number of cases) that transforming adjacent letters in $w$ does not do not change the endpoint of $\tilsig(w)$.  Since $\gamma(w^*)$ obviously maps to a closed loop in $\ZZ_5$,  the same must be true of $\gamma(w)$, and we are done. 
\end{proof}

Next we extend $\tilsig$ to the interior of $Q_d^N$, obtaining a map $\tilsig^N: Q_d^N \to \QQ$.

\begin{Definition}[Weighted $\QQ$-averaging] \label{ave-pentagon}Suppose $\sigma: Q_d \to \ZZ_5$ is a graph map, and $\tilsig: Q_d \to \ZZ$ extends $\sigma$ as described in   
\Ref{lift-pentagon}. Define $\tilsig^N: Q_d^N \to \QQ$ as follows: if $(a_1,\dots,a_d)$ is a vertex of $Q_d^N$, then 
\begin{equation}\label{grid-ext}
\tilsig^N(a_1,\dots,a_d) = 
\sum_{v = (\epsilon_1,\dots,\epsilon_d)} w_v(a_1,\dots,a_d) \tilsig(\epsilon_1,\dots,\epsilon_d),
\end{equation}
where the sum is over all vertices $v=(\epsilon_1,\dots,\epsilon_d)\in Q_d$, where $\epsilon_i \in \{0,1\}$, and where the weights $w_v(a_1,\dots,a_d)$ are
defined by 
\begin{equation}\label{weights-pentagon}
w_{(\epsilon_1,\dots,\epsilon_d)} = 
\prod_{i=1}^d
\begin{cases} 
1-\frac{a_i}{N} &\mbox{if } \epsilon_i = 0 \\
\frac{a_i}{N} &\mbox{if }  \epsilon_i = 1. \\
\end{cases}
\end{equation}
\end{Definition}

It is easy to see that for fixed $(a_1,\dots,a_d)$ we have 
\[
\sum_v w_v(a_1,\dots,a_d) = 1.
\]
Thus, $\tilsig^N(a_1,\dots,a_d)$ is a weighted average of $\tilsig(v)$, over all 
vertices $v$ of $Q_d$, taking values in $\QQ$. Furthermore, formula \Ref{ave-pentagon} 
is equivalent to a
recursive construction, where $\tilsig^N$ is first defined on edges of $Q_d$, then extending to higher dimensional
faces by computing weighted averages along lines between corresponding points on opposing faces (in any order).

\begin{Lemma}\label{ineq-pentagon}
If $\sigma: Q_d \to \ZZ_5$ is a graph map, then for any adjacent vertices $u,v$ of $Q_d^N$, we have
\begin{equation}
\big|\, \tilsig^N(u) - \tilsig^N(v)\,\big| \le 1/N.
\end{equation}
\end{Lemma}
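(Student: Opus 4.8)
The plan is to exploit the recursive description of $\tilsig^N$ noted just after Definition~\ref{ave-pentagon}: the value at a vertex of $Q_d^N$ is obtained by iterated linear interpolation along the $d$ coordinate directions, so it suffices to control how much the map can change when a single coordinate $a_j$ is incremented by $1$. First I would fix two adjacent vertices $u = (a_1,\dots,a_d)$ and $v = (a_1,\dots,a_{j-1},a_j+1,a_{j+1},\dots,a_d)$ of $Q_d^N$, differing only in coordinate $j$. Writing $t_i = a_i/N$ for $i \ne j$, the value $\tilsig^N(u)$ is a multilinear (in fact affine in each block) combination of the corner values $\tilsig(\epsilon_1,\dots,\epsilon_d)$; grouping the $2^d$ corners into $2^{d-1}$ opposite pairs along direction $j$, one gets
\[
\tilsig^N(u) = \sum_{\epsilon'} \Big( \prod_{i\ne j} w_i(\epsilon_i) \Big)\Big[ \big(1-\tfrac{a_j}{N}\big)\tilsig(\epsilon',0) + \tfrac{a_j}{N}\,\tilsig(\epsilon',1)\Big],
\]
where $\epsilon'$ ranges over $\{0,1\}^{[d]\setminus\{j\}}$, $w_i(\epsilon_i) = 1-t_i$ or $t_i$ according to $\epsilon_i$, and $\tilsig(\epsilon',b)$ denotes the corner with $j$-th coordinate $b$. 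The analogous formula for $\tilsig^N(v)$ replaces $a_j$ by $a_j+1$.

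Subtracting, the $\epsilon'$-weighted terms telescope to give
\[
\tilsig^N(v) - \tilsig^N(u) = \frac{1}{N}\sum_{\epsilon'} \Big( \prod_{i\ne j} w_i(\epsilon_i) \Big)\big[\tilsig(\epsilon',1) - \tilsig(\epsilon',0)\big].
\]
Now $\tilsig$ is a graph map $Q_d \to \ZZ$ by Lemma~\ref{lift-pentagon}, and $(\epsilon',0)$ and $(\epsilon',1)$ are adjacent vertices of $Q_d$; by the construction rules \eqref{rules}, adjacent vertices of $Q_d$ have $\tilsig$-values differing by at most $1$, so $|\tilsig(\epsilon',1) - \tilsig(\epsilon',0)| \le 1$. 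Each product $\prod_{i\ne j} w_i(\epsilon_i)$ is nonnegative (since $0 \le a_i/N \le 1$), and summing over all $\epsilon' \in \{0,1\}^{[d]\setminus\{j\}}$ these products add to $1$ (the same normalization already observed for the full weights). Hence $|\tilsig^N(v) - \tilsig^N(u)| \le \frac{1}{N}\sum_{\epsilon'} \prod_{i\ne j} w_i(\epsilon_i) = \frac{1}{N}$, which is the desired bound.

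The only point needing a word of care is the claim that $\tilsig$ changes by at most $1$ across an edge of $Q_d$: this is immediate from \eqref{rules}, since each of the three cases moves the value by $0$ or $\pm 1$. I do not expect a genuine obstacle here; the proof is essentially bookkeeping, the one thing to get right being the grouping of corners into opposite pairs along the single direction in which $u$ and $v$ differ, and the observation that the remaining weights still sum to $1$. (An equivalent and perhaps cleaner writeup is to invoke the recursive construction directly: interpolate first in all directions $i \ne j$ to reduce to a one-dimensional linear function of $a_j/N$ on $[0,1]$ whose endpoint values are averages of the $\tilsig(\epsilon',b)$, then note that such a function has slope bounded by $\max|\tilsig(\epsilon',1)-\tilsig(\epsilon',0)| \le 1$, so consecutive grid values differ by at most $1/N$.)
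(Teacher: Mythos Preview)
Your proof is correct, but it takes a different route from the paper's. The paper argues by induction on $d$: the base case $d=1$ is immediate, and for the inductive step it uses the recursive description of $\tilsig^N$ (averaging along lines between opposite faces) to express $\tilsig^N(u)-\tilsig^N(v)$ as a convex combination of two differences $\tilsig^N(p)-\tilsig^N(q)$ and $\tilsig^N(r)-\tilsig^N(s)$ that lie on opposite outer faces of $Q_d^N$, each of which satisfies the bound by the inductive hypothesis. You instead work directly from the closed-form multilinear formula \eqref{grid-ext}, grouping the $2^d$ corners into $2^{d-1}$ pairs along the $j$-th direction and computing the difference explicitly as a convex combination of the edge differences $\tilsig(\epsilon',1)-\tilsig(\epsilon',0)$. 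Your argument is shorter and avoids induction altogether. The paper's geometric convex-combination argument, on the other hand, is phrased so as to carry over to the generalization in \ref{ineq-gen}, where averaging takes place in the metric tree $\overline{U(G)}$ and no global multilinear formula is available; your direct computation would not transfer to that setting without substantial modification.
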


\begin{proof}[Proof of \Ref{ineq-pentagon}]
If $d=1$ this is immediate. Suppose, inductively, that we have proved \Ref{ineq-pentagon} for 
maps with domain $Q_{d-1}^N$. Then it 
holds for adjacent vertices on the outer faces of $Q_d^N$, by the inductive construction of $\tilsig^N$. Suppose
that $(u,v)$ is an interior edge of $Q_d^N$. Draw lines perpendicular to $(u,v)$, meeting opposite faces of $Q_d^N$
at points $p,q,r$ and $s$, as shown:
\begin{center}
\begin{tikzpicture}[scale=.8]
   \Vertex[x=0 ,y=0,L=$p$]{1}
      \Vertex[x=0 ,y=2,L=$q$]{2}
       \Vertex[x=2 ,y=0,L=$u$]{3}
         \Vertex[x=2 ,y=2,L=$v$]{4}
           \Vertex[x=4 ,y=0,L=$r$]{5}
         \Vertex[x=4 ,y=2,L=$s$]{6}
         \Edge(1)(2) \Edge(3)(4) \Edge(5)(6)
          \Edge(1)(3) \Edge(3)(5) \Edge(2)(4)\Edge(4)(6)
      \end{tikzpicture}
   \end{center}
By hypothesis, both $\tilsig^N(p) - \tilsig^N(q) $ and $\tilsig^N(r) - \tilsig^N(s) $ lie
in the interval $[-1/N,1/N]$, and we also have
\begin{eqnarray*}
\tilsig^N(u) &=& \alpha \tilsig^N(p) + (1-\alpha) \tilsig^N(r), \textrm{ and} \\
\tilsig^N(v) &=& \alpha \tilsig^N(q) + (1-\alpha) \tilsig^N(s) 
\end{eqnarray*}
for some $\alpha \in [0,1]$. It follows that 
$\tilsig^N(u) - \tilsig^N(v) $ is a convex combination of
$\tilsig^N(p) - \tilsig^N(q) $ and $\tilsig^N(r) - \tilsig^N(s) $, 
and hence lies in $[-1/N,1/N]$ as claimed.
\end{proof}

\begin{Definition}\label{round-pentagon}
If $\sigma: Q_d\to \ZZ_5$ is a graph map, then 
$\lfloor \tilsig^N \rfloor : Q_d^N \to \ZZ$ is the map obtained by rounding down $\tilsig^N$ at each
vertex of $Q_d^N$. That is, 
\begin{equation}\label{roundmap}
\lfloor \tilsig^N \rfloor (a_1,\dots,a_d) = \lfloor \tilsig^N(a_1,\dots,a_d)  \rfloor.
\end{equation}
Finally, define
\begin{equation}\label{resmap}
[ \tilsig^N]  (a_1,\dots,a_d) = \lfloor \tilsig^N \rfloor (a_1,\dots,a_d) \mod 5 \in \ZZ_5.
\end{equation}
\end{Definition}

\begin{Corollary}\label{keysteps-pentagon}
Suppose that $\sigma: Q_d \to \ZZ_5$ is a graph map, and 
$[\tilsig^N]: Q_d^N \to \ZZ_5$ is defined
as in \Ref{resmap}. Then
\begin{enumerate}
\item $[\tilsig^N]: Q_d^N \to \ZZ_5$ is a graph map.
\item If $N \ge d$ and $u,v$ are any vertices of a small subcube in $Q_d^N$, then\newline
$\big| [\tilsig^N](u) - [\tilsig^N](v)\big| \le 1$. 
\end{enumerate}
In particular, the image of $\tilsig^N$ restricted to
each of the small subcubes in $Q_d^N$ has size at most two.
\end{Corollary}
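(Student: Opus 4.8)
The plan is to derive everything from Lemma~\ref{ineq-pentagon}, together with the elementary observation that $|x-y|\le 1$ implies $|\lfloor x\rfloor-\lfloor y\rfloor|\le 1$ for real (here, rational) numbers $x,y$. I would begin with part~(1): if $u,v$ are adjacent vertices of $Q_d^N$, then Lemma~\ref{ineq-pentagon} gives $|\tilsig^N(u)-\tilsig^N(v)|\le 1/N\le 1$, so $\lfloor\tilsig^N\rfloor(u)$ and $\lfloor\tilsig^N\rfloor(v)$ are either equal or consecutive integers. Hence $\lfloor\tilsig^N\rfloor\colon Q_d^N\to\ZZ$ is a graph map (where, as in Lemma~\ref{lift-pentagon}, consecutive integers are taken to be adjacent in $\ZZ$), and composing with the reduction $\ZZ\to\ZZ_5$ — which is a graph homomorphism, since consecutive integers map to adjacent, in particular distinct, elements of $\ZZ_5$ — shows that $[\tilsig^N]\colon Q_d^N\to\ZZ_5$ is a graph map.

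For part~(2), fix a small subcube $C$ of $Q_d^N$, say with vertex set $\{b+\epsilon:\epsilon\in\{0,1\}^d\}$ for some $b\in\{0,\dots,N-1\}^d$. Given vertices $u,v\in C$, they differ in at most $d$ coordinates, each by exactly $1$; flipping these coordinates one at a time produces a path from $u$ to $v$ of length at most $d$ that remains inside $C$. Adding the per-edge bound of Lemma~\ref{ineq-pentagon} along this path yields $|\tilsig^N(u)-\tilsig^N(v)|\le d/N\le 1$ once $N\ge d$. The floor observation then gives $|\lfloor\tilsig^N\rfloor(u)-\lfloor\tilsig^N\rfloor(v)|\le 1$, and reducing mod $5$ can only identify values, so $[\tilsig^N](u)$ and $[\tilsig^N](v)$ are equal or adjacent in $\ZZ_5$; that is, $|[\tilsig^N](u)-[\tilsig^N](v)|\le 1$.

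For the concluding assertion, the same path argument shows that all values $\tilsig^N(w)$, $w\in C$, lie in a common real interval of length at most $d/N\le 1$; the floor function takes at most two consecutive integer values on such an interval, so $\lfloor\tilsig^N\rfloor(C)$ has at most two elements, and hence so does $[\tilsig^N](C)$. I do not anticipate a genuine obstacle: the substance is entirely contained in Lemma~\ref{ineq-pentagon}. The only points demanding mild care are the monotonicity property of $\lfloor\cdot\rfloor$ under perturbations of size at most $1$, and checking that the coordinate-flipping path joining $u$ and $v$ never leaves the small subcube $C$ (it does not, since each intermediate vertex still lies coordinatewise between $b$ and $b+(1,\dots,1)$).
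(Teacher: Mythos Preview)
Your proposal is correct and follows essentially the same route as the paper's proof: both derive part~(1) from Lemma~\ref{ineq-pentagon} together with the elementary floor inequality $|\alpha-\beta|\le 1 \Rightarrow |\lfloor\alpha\rfloor-\lfloor\beta\rfloor|\le 1$, and both obtain part~(2) by linking any two vertices of a small subcube by a path of length at most $d$ and summing the per-edge bound. Your write-up simply supplies more of the routine detail (e.g., that the coordinate-flipping path stays inside the subcube, and the final ``interval of length $\le 1$'' argument for the image-size claim) than the paper's terse version.
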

\begin{proof}[Proof of \Ref{keysteps-pentagon}]
Statement (1) follows immediately from \Ref{ineq-pentagon} and the elementary fact that if $\alpha, \beta\in \RR$ and 
$|\alpha-\beta|\le 1$, then $\big| \lfloor \alpha \rfloor - \lfloor \beta \rfloor \big| \le 1$. Statement (2) follows
from \Ref{ineq-pentagon} and the fact that any two vertices in $Q_d$ can be linked by a path of length $\le d$.
\end{proof}

Now we are finally in a position to define $S^N$, by summing the restrictions of
$[\tilde{\sigma}^N]$ to each of the small subcubes of $Q_d^N$.

\begin{Definition}\label{sn-pentagon}
For any $d\ge 1, N \ge 2$, define $S^N: \CC_d(\ZZ_5) \to \CC_d(\ZZ_5)$ by setting
\[
S^N(\sigma) = \sum_{q_d \in Q_d^N} [\tilsig^N] \,\big| q_d,
\]
where $\sigma$ a generator of $\CC_d(\ZZ_5)$,
and the sum is over all of the $N^d$ small subcubes $q_d$ of $Q_d^N$.
\end{Definition}

The next corollary includes the first two parts of \Ref{subdiv-pentagon}. 
\begin{Corollary}\label{cor-parts12-pentagon}
\begin{enumerate}
\item
For all $d\ge 1, N \ge 2$, the map $S^N$ is a chain map.
\item
If  $\sigma$ is a generator of $\CC_d(\ZZ_5)$, then each summand of $S^d(\sigma)$ is a graph map with image
of size at most $2$.
\end{enumerate}
\end{Corollary}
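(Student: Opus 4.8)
The plan is to prove the two parts of \Ref{cor-parts12-pentagon} by first establishing that $S^N$ is well-defined on chains (not just on generators), then verifying the chain-map identity, and finally reading off part (2) directly from \Ref{keysteps-pentagon}.

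First I would address well-definedness. The map $S^N$ is defined on generators $\sigma$ of $\CC_d(\ZZ_5)$ via the recipe: lift to $\tilsig$ (\Ref{lift-pentagon}), extend to $\tilsig^N$ by weighted averaging, round down and reduce mod $5$ to get $[\tilsig^N]$, and sum the restrictions to the $N^d$ small subcubes. Since $\CC_d$ is a quotient by the degenerate cubes, I must check that $S^N$ descends to the quotient, i.e. that if $\sigma$ is degenerate then $S^N(\sigma)$ is a sum of degenerate small subcubes (hence zero in $\CC_d$). If $\sigma$ does not depend on coordinate $i$, then the lift $\tilsig$ inherits this (the path rules \Ref{rules} never change $\tilsig$ when moving in the $i$-direction, up to the translation ambiguity which is global), the weighted average $\tilsig^N$ is independent of $a_i$, and therefore each small subcube of $[\tilsig^N]$ has two opposing $i$-faces that agree, i.e. is degenerate. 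I would also note that the lift is only unique up to a global translation of the image in $\ZZ$, but translating $\tilsig$ by an integer translates $\tilsig^N$ by the same integer, hence leaves $\lfloor\tilsig^N\rfloor \bmod 5$ unchanged; so $S^N(\sigma)$ is genuinely well-defined.

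Next comes the chain-map identity $\partial_d S^N(\sigma) = S^N \partial_d(\sigma)$, which is the heart of part (1). The standard mechanism is a telescoping/cancellation argument on the grid $Q_d^N$. For the right-hand side: $\partial_d \sigma$ is an alternating sum of the face cubes $f_i^{\pm}\sigma: Q_{d-1}\to\ZZ_5$, and I must check that the construction commutes with taking faces — that is, the lift, averaging, and rounding of a face of $\sigma$ agree (up to global translation, which washes out mod $5$) with the corresponding boundary slab of $[\tilsig^N]$. This is exactly the content of the remark after \Ref{ave-pentagon} that $\tilsig^N$ is built recursively by averaging along lines between opposing faces, so the restriction of $\tilsig^N$ to a face $a_i = 0$ or $a_i = N$ of $Q_d^N$ is precisely $\widetilde{(f_i^{\mp}\sigma)}^N$. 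For the left-hand side: $\partial_d$ applied to $S^N(\sigma) = \sum_{q_d} [\tilsig^N]|q_d$ produces faces of all $N^d$ small subcubes; every $(d-1)$-face interior to $Q_d^N$ is shared by exactly two adjacent subcubes and appears with opposite signs in $\partial_d$ (because the two subcubes induce opposite orientations on the shared face), so all interior faces cancel, leaving only the faces lying on the boundary $\partial Q_d^N$, which by the previous sentence reassemble into $S^N(\partial_d\sigma)$. Care is needed with the sign bookkeeping: one must match the $(-1)^i$ and the $+/-$ conventions of $\partial^\Cube$ with the colexicographic vertex ordering of the subcubes, and confirm the two-subcube cancellation has the right signs and that degenerate faces (which vanish in $\CC_{d-1}$) don't disrupt anything.

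Part (2) is then almost immediate: specializing to $N = d$, \Ref{keysteps-pentagon}(1) says each $[\tilsig^d]|q_d$ is a graph map $Q_d \to \ZZ_5$, and \Ref{keysteps-pentagon}(2) together with its concluding sentence says its image has size at most two. I expect the main obstacle to be the sign/orientation bookkeeping in the chain-map verification — making the interior-face cancellation and the boundary-face reassembly precise with the specific $\partial^\Cube$ sign conventions and the colex ordering — rather than anything conceptually deep; the geometric picture (subdivide, the pieces glue up, interior walls cancel) is classical, and Lemmas \ref{lift-pentagon} through \ref{keysteps-pentagon} have already supplied all the analytic input.
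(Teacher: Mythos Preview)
Your proposal is correct and follows essentially the same approach as the paper: check that the construction of $[\tilsig^N]$ commutes with taking faces, invoke internal cancellation of shared faces between adjacent small subcubes to obtain $\partial_d S^N = S^N \partial_d$, and read off part (2) from \Ref{keysteps-pentagon}. You are in fact more careful than the paper about well-definedness (descent to the quotient by degenerate cubes and independence of the lift); one small slip is that the translation ambiguity in the lift is specifically by multiples of $5$ (since any lift must reduce to $\sigma$ mod $5$), not by arbitrary integers --- and it is this that makes $\lfloor\tilsig^N\rfloor \bmod 5$ independent of the chosen lift.
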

\begin{proof}[Proof of \Ref{cor-parts12-pentagon}]
The essential step in proving (1) is to show that 
the definition of $[\tilsig_N]$ on a face
$f_i^\epsilon Q_d^N$ of $Q_d^N$ agrees exactly
with the definition of 
$[\widetilde{(f_i^\epsilon \sigma)}_N]$ on $Q_{d-1}^N$. 
We leave the verification of this fact to the reader. 
Internal cancellation of faces of small cubes then implies identity $\partial_d S^N(\sigma) = S^N (\partial_d \sigma)$.
Statement (2) is a consequence of \Ref{keysteps-pentagon}.
\end{proof}
It remains to prove part (3) of \Ref{subdiv-pentagon}.  We will define maps $h_{d-1}$ and $h_d$ and show that
for all $\sigma\in \CC_d(\ZZ_5)$, 
\begin{equation}\label{chop-proof}
\partial_{d+1} h_d(\sigma) = \sigma - S^d(\sigma) - h_{d-1} \partial_d(\sigma) \pm
\textrm{ degenerate terms}.
\end{equation}
The argument generally follows \cite[\S 7.7]{Massey91}, where a similar construction is used to obtain an analogous result for ordinary cubical simplicial homology. However, there are significant variations, including the need for subdivisions $S^N$ with $N>2$ and also for more than two levels in the definition of $h_d$ and $h_{d-1}$. 
\begin{Definition}\label{hdef-pentagon}
For $\sigma\in \CC_d(\ZZ_5)$, we define $h_d$  by the following process.
\begin{enumerate}
\item
First construct a $\QQ$-labeling $\tilde{\gamma}(\sigma)$ of the $(d+1)$-dimensional grid $Q_{d+1}^d = \{(a_1,\dots, a_{d+1}) \;|\; 0 \le a_i \le d\}$ as follows:
\begin{enumerate}
\item
The bottom face ($a_{d+1}=0$) is $\tilsig^d$ (the second step in constructing $S^d(\sigma)$).
\item
The top face ($a_{d+1}= d$) is $T_d(\sigma)$, defined by the rule
\[
T_d(\sigma) (a_1,\dots,a_d) = \tilsig(\bar{a_1},\dots,\bar{a_d}),\;\;
\bar{a_i} = \begin{cases} 0 & \textrm{ if $a_i=0$}\\1 & \textrm{ if $a_i > 1$}.\end{cases}
\]
\item
Along each line from $(a_1,\dots,a_d,0)$ to $(a_1,\dots,a_d,d)$, assign labels by computing equally
spaced averages between $\tilde{\gamma}(\sigma)(a_1,\dots,a_d,0)$ and \newline$\tilde{\gamma}(\sigma)(a_1,\dots,a_d,d)$.
\end{enumerate}
\item
Transform $\tilde{\gamma}(\sigma)$ into a $\ZZ$-labeling $\lfloor\tilde{\gamma}(\sigma)\rfloor$ by rounding down, and then
into a $\ZZ_5$ labeling $[\tilde{\gamma}(\sigma)]$ by reducing mod $5$.
\item
Define $h_d(\sigma)$ to be $(-1)^{d+1}$ times the sum of the $d^d$ small subcubes in $[\tilde{\gamma}(\sigma)]$.
\end{enumerate}
\end{Definition}

\noindent
This construction is designed to embody a proof of \Ref{chop-proof} for a single value of $d$. For that purpose, $h_{d-1}$ must be 
defined compatibly with $h_d$, and not independently. More precisely, $h_{d-1}$ is defined 
as in \Ref{hdef-pentagon}, but on a grid of type $Q_d^d$, not $Q_{d-1}^d$. It will follow that the side faces of 
$h_d(\sigma)$ appear as images of $h_{d-1}$ applied to the faces of $\sigma$. 

The following partially labeled diagram illustrates the construction of $h_2: \CC_2(G) \to \CC_3(G)$ for a generic singular $2$-cube $\sigma = (a,b,c,d)$.

\begin{center}
\begin{tikzpicture}[scale=.4]
\def\shfta{.43}
\def\shftb{.5}


\draw[thick,red] (0,4) -- (8,4) -- (14,8) -- (6,8) -- (0,4);
\draw[thick,red] (4,0) -- (10,4) -- (10,12) -- (4,8) -- (4,0);
\draw[thick,red] (3,2) -- (11,2) -- (11,10) -- (3,10) -- (3,2);
\draw[thick,red] (7,2) -- (7,10); \draw[thick,red] (3,6) -- (11,6); \draw[thick,red] (4,4) -- (10,8);

\draw[very thick] (0,0) -- (8,0) -- (14,4) -- (14,12) -- (6,12) -- (0,8) -- (0,0) ; 
\draw[very thick] (0,8) -- (8,8) -- (14,12); \draw[very thick] (8,0) -- (8,8);
\draw[thick] (0,0) -- (6,4) -- (14,4); \draw[thick] (6,4) -- (6,12);

\foreach \x in {0,1,2}
\foreach \y in {0,1,2}
{\draw[fill=red] (4+3*\x,2*\x + 4*\y) circle (3pt);}

\foreach \x in {3,7,11}
\foreach \y in {2,6,10}
{\draw[fill=red] (\x,\y) circle (3pt);}

\foreach \x in {0,1,2}
\foreach \y in {0,1,2}
{\draw[fill=red] (4*\x+3*\y,4+2*\y) circle (3pt);}

\foreach \x in {0,1}
\foreach \y in {0,1}
\foreach \z in {0,1}
{\draw[fill=black] (8*\x+6*\z,8*\y+4*\z) circle (5pt);}

\node [color=blue] at (0-\shfta,0+\shfta) {\Large $a$};
\node [color=blue] at (8-\shfta,0+\shfta) {\Large $b$};
\node [color=blue] at (6-\shfta,4+\shfta) {\Large $c$};
\node [color=blue] at (14-\shfta,4+\shfta) {\Large $d$};
\node [color=blue] at (0-\shfta,8+\shfta) {\Large $a$};
\node [color=blue] at (8-\shfta,8+\shfta) {\Large $b$};
\node [color=blue] at (6-\shfta,12+\shfta) {\Large $c$};
\node [color=blue] at (14-\shfta,12+\shfta) {\Large $d$};

\node [color=blue] at (4-\shfta,8+\shfta) {\Large $b$};
\node [color=blue] at (3-\shfta,10+\shfta) {\Large $c$};
\node [color=blue] at (7-\shfta,10+\shfta) {\Large $d$};
\node [color=blue] at (11-\shfta,10+\shfta) {\Large $d$};
\node [color=blue] at (10-\shfta,12+\shfta) {\Large $d$};

\node [color=blue] at (4-\shfta,0+\shftb) {\large $\overline{ab}$};
\node [color=blue] at (3-\shfta,2+\shfta) {\large $\overline{ac}$};
\node [color=blue] at (7-\shfta,2+\shftb) {\large $\overline{abcd}$};
\node [color=blue] at (11-\shfta,2+\shftb) {\large $\overline{bd}$};
\node [color=blue] at (10-\shfta,4+\shftb) {\large $\overline{cd}$};


\end{tikzpicture}
\end{center}

\medskip
\noindent
According to 
\Ref{hdef-pentagon}, the vertex at the centroid of the grid should be labeled
\[
\frac{1}{2} \big(d + \frac{a+b+c+d}{4}\big) = \frac{a+b+c+5d}{8}.
\]

\smallskip

\begin{proof}[Proof of \Ref{subdiv-pentagon}, Part (3i)] The main difficulty is showing that
the labeling $[\tilde{\gamma}]$ defines a graph map from $Q_{d+1}^d$ to $\ZZ_5$. Once this is done, verifying statement {\em(3i)} reduces to a purely formal calculation.  

\begin{Lemma}\label{key-step-pentagon} If $\sigma \in \CC_d(\ZZ_5)$, then in the $\QQ$-labeling $\tilde{\gamma}(\sigma)$, every pair of adjacent labels differs by at most $1$.
\end{Lemma}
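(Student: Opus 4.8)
The plan is to prove \Ref{key-step-pentagon} by analyzing the grid $Q_{d+1}^d$ face by face, using the structure imposed by \Ref{hdef-pentagon}. The labeling $\tilde{\gamma}(\sigma)$ is built from three ingredients: the bottom face $\tilsig^d$, the top face $T_d(\sigma)$, and linearly interpolated values along the vertical fibers. So two adjacent vertices of $Q_{d+1}^d$ are either (a) both in the bottom face, (b) both in the top face, (c) both in a common horizontal slice $a_{d+1}=j$ for some $0<j<d$, or (d) vertically adjacent in a common fiber. I would treat these cases in turn.

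For case (a), adjacent labels on the bottom face differ by at most $1/N=1/d\le 1$ by \Ref{ineq-pentagon}. For case (b), I must show that adjacent labels of $T_d(\sigma)$ differ by at most $1$. Here $T_d(\sigma)(a_1,\dots,a_d)=\tilsig(\bar a_1,\dots,\bar a_d)$ where $\bar a_i\in\{0,1\}$ records whether $a_i$ is zero or positive; moving to an adjacent vertex in $Q_d^d$ changes one coordinate $a_i$ by $1$, which either leaves $\bar a_i$ unchanged (labels equal) or flips it between $0$ and $1$ (this only happens when $a_i$ moves between $0$ and $1$), and in the latter case the two $\tilsig$-values are at adjacent vertices of $Q_d$, hence differ by at most $1$ by the rules \Ref{rules}. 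The harder sub-cases are (c) and (d). For a horizontal slice at height $j$, the label at $(a_1,\dots,a_d,j)$ is the convex combination $(1-t)\,\tilsig^d(a_1,\dots,a_d)+t\,T_d(\sigma)(a_1,\dots,a_d)$ with $t=j/d$; for two horizontally adjacent vertices, the difference of labels is the same convex combination of the corresponding differences on the bottom and top faces, each of which is $\le 1$ by (a) and (b), so the slice difference is $\le 1$ as well. Case (d) is immediate: along a fiber the $d$ values are equally spaced between a bottom value and a top value whose difference is at most $|\tilsig^d(v)|$ versus $|T_d(\sigma)(v)|$, but I need to bound the \emph{fiber length} — and this is exactly where care is required.

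The main obstacle is case (d): bounding $\big|\tilsig^d(a_1,\dots,a_d)-T_d(\sigma)(a_1,\dots,a_d)\big|$ and dividing by the number $d$ of subintervals. The point is that $T_d(\sigma)(a_1,\dots,a_d)=\tilsig(\bar a_1,\dots,\bar a_d)$ is precisely the value of $\tilsig$ at the ``floor vertex'' of the subcube of $Q_d$ containing the rescaled point $(a_1/d,\dots,a_d/d)$, while $\tilsig^d(a_1,\dots,a_d)$ is a weighted average of the $\tilsig$-values at the $2^d$ vertices of $Q_d$. Since consecutive integers along any edge of $Q_d$ produce $\tilsig$-values differing by at most $1$, and any two vertices of $Q_d$ are joined by a path of length $\le d$, all $2^d$ corner values of $\tilsig$ lie within an interval of length $\le d$; hence the weighted average $\tilsig^d(a_1,\dots,a_d)$ and the particular corner value $\tilsig(\bar a_1,\dots,\bar a_d)$ differ by at most $d$. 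Dividing into $d$ equal steps gives adjacent labels along each fiber differing by at most $1$. In fact one can do slightly better: $\tilsig^d(a_1,\dots,a_d)$ and $\tilsig(\bar a_1,\dots,\bar a_d)$ both lie in the closed interval spanned by the corner values of $\tilsig$ over the single subcube of $Q_d$ adjacent to the relevant corner — but the crude bound of $d$ already suffices, and I would present that.

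Having disposed of all four cases, I would conclude that every pair of adjacent labels in $\tilde{\gamma}(\sigma)$ differs by at most $1$. As a remark for the reader, this is the step that forces the choice $N=d$ in the definition of $h_d$: the vertical subdivision into exactly $d$ parts is what converts the $O(d)$ spread between the bottom and top faces into unit-sized steps, and a coarser subdivision would fail. The passage from this Lemma to the claim that $[\tilde{\gamma}(\sigma)]$ is a graph map $Q_{d+1}^d\to\ZZ_5$ is then the same elementary argument as in \Ref{keysteps-pentagon}(1): rounding down preserves the property that adjacent values differ by at most $1$, and reduction mod $5$ turns a difference of $1$ in $\ZZ$ into an edge or a fixed point in $\ZZ_5$.
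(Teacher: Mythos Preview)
Your proof is correct, and your case (d) is exactly the paper's argument: all values of $\tilsig$ lie in an interval of length at most $d$, so each vertical fiber has length at most $d$ and subdividing it into $d$ equal pieces yields steps of size at most $1$. The paper's proof is terser and does not explicitly treat the horizontal edges in interior slices (your case (c)) or the top face (your case (b)); your convex-combination argument for (c), together with the explicit check that $T_d(\sigma)$ is a graph map, fills in details the paper leaves implicit.
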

\begin{proof}[Proof of \Ref{key-step-pentagon}]
In the original $\sigma$, every pair of labels (adjacent or not) differs by at most $d$. This property carries over
to $\tilsig^d$, and every new edge in $\tilde{\gamma}(\sigma)$ is obtained by subdividing a segment from one
of the original labels to a label of $\tilsig^d$ into $d$ equal pieces. Hence the lengths of these pieces
are at most $d \times (1/d) = 1$.
\end{proof}
As we have observed in \Ref{keysteps-pentagon}, rounding preserves the 
property that labels of adjacent vertices differ by at most $1$, and reducing mod $5$ 
translates this into adjacency in $\ZZ_5$. It follows that $[\tilde{\gamma}(\sigma)]$ defines a graph map from $Q_{d+1}^d$ to $\ZZ_5$.

The proof of statement \Ref{chop-proof} is now straightforward. When the top face ($a_{d+1}=d$) is expanded into small cubes, it consists of $\sigma$ plus degenerate terms.  When the bottom face ($a_{d+1}=0$) is expanded, it is exactly $S^d(\sigma)$. Furthermore, the sign $(-1)^{d+1}$ of $h_d(\sigma)$ has been chosen 
exactly so that the terms in $\sigma - S^d(\sigma)$ appear with correct signs in the expansion of
$\partial_{d+1} h_d(\sigma)$. It is easy to see that the remaining terms (aside from $\sigma$) coming from the 
top face are all degenerate. Finally, $h_{d-1}$ has been defined exactly so that terms in the expansion of
$\partial_{d+1} h_d(\sigma)$ arising from faces other than $a_{d+1}=0$ and $a_{d+1} =d$ are are
accounted for (with proper sign) by $h_{d-1} \partial_d (\sigma)$.  
This completes the proof of part {\it (3i)} of 
\Ref{subdiv-pentagon}.
\end{proof} 

\begin{proof}[Proof of \Ref{subdiv-pentagon}, Part (3ii)]
We must show that if $\sigma$ has two labels, then 
every non-degenerate cube in the expansion of $h_d(\sigma)$ has two labels. Suppose $\sigma$ has two labels, and that these are mapped to $t$ and $t+1$ by the lifting of $\sigma$ to $\tilsig$.  Then the top face $T_d(\sigma)$ has these same
labels, and the bottom face $\tilsig^d$ has rational labels in the interval $[t,t+1]$. By construction, the remaining labels of $h_d(\sigma)$ 
all lie in the interval $[t,t+1]$. When these are rounded down,  the results will again all be equal to $t$ or $t+1$, and the result follows.
\end{proof} 
This completes the proof of \Ref{subdiv-pentagon}.
\end{proof} 

\Ref{subdiv-pentagon} shows that computing the homology of $\ZZ_5$ can be reduced to computing the homology of the 
chain complex $\CC^{(2)}(\ZZ_5)$, i.e. the complex generated by singular cubes whose image has size $\le 2$. The second main
result of this section shows that the complex 
$\CC^{(2)}(\ZZ_5)$ has trivial homology in all dimensions $d\ge 2$.

\begin{Theorem}\label{main-pentagon}
For all $d\ge 2$, $\Hom_{d}(\CC^{(2)}(\ZZ_5)) = (0)$, and consequently $\Hom_{d}(\CC(\ZZ_5)) = (0)$.
\end{Theorem}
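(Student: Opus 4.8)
The plan is to analyze the chain complex $\CC^{(2)}(\ZZ_5)$ directly, using the fact that the only connected subgraphs of $\ZZ_5$ on at most two vertices are its five edges. First I would observe that if $\sigma$ is a non-degenerate generator of $\CC^{(2)}_d(\ZZ_5)$ with $d\ge 1$, then $\mathrm{Im}(\sigma)$ is connected (being the image of the connected graph $Q_d$ under a graph homomorphism) and has exactly two vertices (one vertex would force $\sigma$ to be constant, hence degenerate), so $\mathrm{Im}(\sigma)$ is one of the five edges $e$ of $\ZZ_5$. The faces $f_i^\pm\sigma$ of such a $\sigma$ then have image contained in $e$, so each is either again a non-degenerate cube with image $e$ or a constant cube, and a constant cube is $0$ in $\CC_{d-1}$ whenever $d-1\ge 1$. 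Hence for each $d\ge 1$ the module $\CC^{(2)}_d(\ZZ_5)$ is the direct sum, over the five edges $e$, of the submodule $\CC^{(2),e}_d$ generated by non-degenerate cubes with image $e$, and for $d\ge 2$ the boundary map $\partial_d$ carries each $\CC^{(2),e}_d$ into $\CC^{(2),e}_{d-1}$. Since $\Hom_d(\CC^{(2)}(\ZZ_5))$ for $d\ge 2$ is computed from the modules in the three consecutive degrees $d-1,d,d+1$, all of which are $\ge 1$, this yields $\Hom_d(\CC^{(2)}(\ZZ_5))=\bigoplus_e\Hom_d(\CC^{(2),e})$ for all $d\ge 2$. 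This is precisely where the hypothesis $d\ge 2$ rather than $d\ge 1$ is used: in degree $0$ the module $\CC^{(2)}_0(\ZZ_5)=\CC_0(\ZZ_5)$ is not split by the edge decomposition, and that gluing is exactly what produces the nonzero group $\Hom_1^\Cube(\ZZ_5)$.

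It remains to show that each summand $\CC^{(2),e}$ has trivial homology in degrees $\ge 2$. Identify $e$ with the two-vertex graph $K_2=\{0,1\}$ carrying its single edge. Because the two vertices of $K_2$ are adjacent, a singular $d$-cube with image in $e$ is the same thing as an arbitrary labeling $Q_d\to\{0,1\}$, so in degrees $\ge 1$ the complex $\CC^{(2),e}$ is just the discrete cubical chain complex $\CC^\Cube(K_2)$. For this complex I would write down an explicit cone contraction based at $0\in K_2$: for a singular $d$-cube $\sigma$, define the $(d+1)$-cube $C\sigma$ by $C\sigma(a_1,\dots,a_d,0)=\sigma(a_1,\dots,a_d)$ and $C\sigma(a_1,\dots,a_d,1)=0$. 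This is a graph map precisely because $0$ is equal or adjacent to every vertex of $K_2$, and one checks directly that $C$ carries degenerate cubes to degenerate cubes, so it descends to $\CC^\Cube(K_2)$. Computing $\partial_{d+1}C\sigma$ by separating the face-pair $i=d+1$ (which contributes $(-1)^{d+1}(\sigma-c_0)$, with $c_0$ the constant cube at $0$) from the faces $i\le d$ (along which $C$ commutes with $f_i^\pm$, so their total contribution is $C\,\partial_d\sigma$), one gets $\partial_{d+1}C\sigma=C\,\partial_d\sigma+(-1)^{d+1}(\sigma-c_0)$. For $d\ge 1$ the cube $c_0$ is degenerate, hence $0$ in $\CC^\Cube_d(K_2)$, so $h_d:=(-1)^{d+1}C$ satisfies $\partial_{d+1}h_d+h_{d-1}\partial_d=\mathrm{id}$ on $\CC^\Cube_d(K_2)$ for all $d\ge 1$. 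Therefore $\Hom_d(\CC^\Cube(K_2))=0$ for $d\ge 1$, whence $\Hom_d(\CC^{(2),e})=0$ for $d\ge 2$, and so $\Hom_d(\CC^{(2)}(\ZZ_5))=0$ for all $d\ge 2$. The last assertion, $\Hom_d(\CC(\ZZ_5))=0$ for $d\ge 2$, is then immediate from \ref{corr-pentagon}.

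I expect the difficulties here to be bookkeeping rather than substance: stating carefully that the edge decomposition is a splitting of chain complexes only in the range of degrees that matters (and tracking why this forces the bound $d\ge 2$ and not $d\ge 1$); verifying that the cone $C$ is well defined modulo degenerate chains; and fixing the sign in the homotopy identity so that $h_\bullet$ is a genuine chain contraction rather than merely a homotopy to a nonzero map. None of these should pose a serious obstacle.
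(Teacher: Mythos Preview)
Your approach is essentially the same as the paper's: decompose $\CC^{(2)}_d(\ZZ_5)$ for $d\ge 1$ as a direct sum over the five edges, observe that this splitting is respected by $\partial$ in the relevant range, and then use that each edge has trivial discrete cubical homology in positive degrees. The paper simply cites \cite{BGJW} for this last step (an edge is a contractible graph), whereas you supply an explicit cone contraction; both are fine.

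One small correction: your sentence ``each is either again a non-degenerate cube with image $e$ or a constant cube'' is not quite right. A face of a non-degenerate cube $\sigma:Q_d\to e$ can be degenerate without being constant (for instance, with $d=3$ and $\sigma(a,b,c)=a$ when $c=0$ and $\sigma(a,b,c)=b$ when $c=1$, the face $f_3^-\sigma(a,b)=a$ is degenerate but not constant). This does not affect your argument, since any degenerate face is zero in $\CC_{d-1}$ and the non-degenerate faces still have image contained in $e$; just adjust the wording.
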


\begin{proof}[Proof of \Ref{main-pentagon}]
Let $e = (i,i+1)$ (mod $5$) be an edge of $\ZZ_5$. For $d\ge 0$, define
\[
\CC_d^{(e)} (\ZZ_5)= \{ \sigma \in \CC_d(\ZZ_5) \;|\; \textrm{Im}(\sigma) \subseteq e\}.
\]
Clearly, for $d\ge 1$ we have $\partial[\CC_d^{(e)} (\ZZ_5)] \subseteq \CC_{d-1}^{(e)} (\ZZ_5)$. Furthermore, for
$d\ge 1$ we have 
$
\CC_d(\ZZ_5) = \bigoplus_e \CC_d^{(e)}(\ZZ_5).
$
It follows that for $d\ge 2$, $\Hom_{d}(\CC(\ZZ_5))\approx \bigoplus_e \Hom_{d}(\CC^{(e)}(\ZZ_5))$.
But $\Hom_{d}(\CC^{(e)}(\ZZ_5))$ is the $d$\textsuperscript{th}  homology of a contractible graph , and hence is trivial for all $d>0$
(see \cite{BGJW} for 
precise definitions, and for a proof of this result).
\end{proof}

\section{Graphs without $3$-Cycles or $4$-Cycles}

In this section we will extend \Ref{subdiv-pentagon}, \Ref{corr-pentagon}, and \Ref{main-pentagon} to arbitrary graphs $G$  
containing no $3$-cycles or $4$-cycles.  

\begin{Theorem}\label{thm-largegirth}Suppose that $G$ is a graph with
no $3$-cycles or $4$-cycles.
Then,
for $d\ge 1$ there exists a map $S^d: \CC_d(G) \to \CC_d(G)$ such that 
\begin{enumerate}
    \item $S^d$ is a chain map,
    \item
$\textrm{Im}(S^d) \subseteq \CC_d^{(2)}(G)$, and 
\item
for $d\ge 1$, 
$S^d$ induces an isomorphism of homology $S^d_*: \Hom_{d}(\CC(G)) \to  \Hom_{d}(\CC^{(2)}(G))$.
\end{enumerate}
Furthermore, for $d\ge 2$, $\Hom_{d}(\CC^{(2)}(G))=(0)$.
\end{Theorem}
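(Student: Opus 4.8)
\emph{Setup and lifting.} The plan is to transcribe the whole argument of Section~\ref{section-pentagon} almost verbatim, replacing the map $\ZZ\to\ZZ_5$ by the universal covering map $p\colon\tilde{G}\to G$. The graph $\tilde{G}$ is a tree, and the role played in Section~\ref{section-pentagon} by ``$\QQ\supset\ZZ$'' (rational averages refining the vertex set of a line) will be played by the metric realization $|\tilde{G}|$, an $\RR$-tree with unit-length edges, together with its vertex set $V(\tilde{G})$; since every singular cube has finite image, all constructions live in a finite subtree even though $\tilde{G}$ is usually infinite. The first thing I would prove is the analogue of \ref{lift-pentagon}: every singular $d$-cube $\sigma\colon Q_d\to G$ factors as $\sigma=p\circ\tilsig$ for some graph map $\tilsig\colon Q_d\to\tilde{G}$, unique up to a deck transformation. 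One fixes $v_0=(0,\dots,0)$, lifts $\sigma(v_0)$, and propagates along edges of $Q_d$ using unique path-lifting for $p$; exactly as in \ref{lift-pentagon}, well-definedness around loops reduces, after adjacent transpositions of the word encoding a loop in the Boolean lattice, to checking that the two routes around an elementary $4$-cycle of $Q_d$ produce the same lift-endpoint. The image of such a $4$-cycle under $\sigma$ is a closed walk of length $\le 4$ in $G$; since $G$ has no $3$- or $4$-cycles this walk necessarily backtracks and is null-homotopic in $G$, hence lifts to a closed walk in $\tilde{G}$. This is the one place the hypothesis enters, and it is essential: otherwise the image of a $4$-cycle could be a genuine $4$-cycle of $G$, which does not lift. (The operator $S^N$ below is then independent of the choice of $\tilsig$, since deck transformations are automorphisms of $\tilde{G}$ commuting with $p$.)

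\emph{Averaging, rounding, and parts (1)--(2).} Next I would rerun Steps 2--3 of Section~\ref{section-pentagon} inside $|\tilde{G}|$. Given $\tilsig$, define $\tilsig^N\colon Q_d^N\to|\tilde{G}|$ by the recursion of \ref{ave-pentagon}: prescribe the $2^d$ corners by $\tilsig$, then interpolate along geodesics of $|\tilde{G}|$ (edges first, then higher faces). Because $|\tilde{G}|$ is $\mathrm{CAT}(0)$, the distance between two geodesics is a convex function of the parameter, so the proof of \ref{ineq-pentagon} carries over unchanged: adjacent vertices of $Q_d^N$ have images at distance $\le 1/N$, and hence for $N\ge d$ any two vertices of a small subcube have images at tree-distance $\le d/N\le 1$. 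For the rounding I would root $\tilde{G}$ at $r_0=\tilsig(v_0)$, orient its edges away from $r_0$, and define $\lfloor\,\cdot\,\rfloor\colon|\tilde{G}|\to V(\tilde{G})$ by sending an interior point of an oriented edge $a\to b$ to $a$ and fixing all vertices (``round toward the root''). A short case analysis shows that $d(x,y)\le 1$ forces $\lfloor x\rfloor$ and $\lfloor y\rfloor$ to be equal or adjacent, and, using that closed balls in an $\RR$-tree have the Helly property, that any set of points with pairwise distances $\le 1$ lies in a ball of radius $\tfrac12$ and therefore rounds to at most two (necessarily adjacent) vertices. Thus $[\tilsig^N]:=p\circ\lfloor\tilsig^N\rfloor\colon Q_d^N\to G$ is a graph map (the analogue of \ref{keysteps-pentagon}(1)) whose restriction to each small subcube has image of size $\le 2$ (the analogue of \ref{keysteps-pentagon}(2), using $N\ge d$). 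Setting $S^N(\sigma)=\sum_q[\tilsig^N]\big|q$ over the $N^d$ small subcubes of $Q_d^N$ and taking $N=d$ in dimension $d$, parts (1) and (2) follow as in \ref{cor-parts12-pentagon}: the construction commutes with the face maps, so internal cancellation of subcube faces gives $\partial S^d=S^d\partial$, while \ref{keysteps-pentagon} supplies the image bound.

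\emph{Chain homotopy and part (3).} For part (3) I would reproduce \ref{hdef-pentagon} over $|\tilde{G}|$: build an $|\tilde{G}|$-labeling $\tilde{\gamma}(\sigma)$ of $Q_{d+1}^d$ whose bottom face is $\tilsig^d$, whose top face is the ``clamped'' map $T_d(\sigma)$ built from $\tilsig$, and whose vertical lines carry equally spaced geodesic interpolations between the two. All $\tilsig$-values lie within tree-distance $d$ of one another (the diameter of $Q_d$ is $d$), and by $\mathrm{CAT}(0)$-convexity so do all $\tilsig^d$-values; hence every vertical segment has length $\le d$ and is cut into $d$ pieces of length $\le 1$, so adjacent labels of $\tilde{\gamma}(\sigma)$ differ by $\le 1$ (the analogue of \ref{key-step-pentagon}). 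Rounding toward $r_0$ and then applying $p$ yields a graph map $Q_{d+1}^d\to G$; its expansion into small subcubes, times $(-1)^{d+1}$, is $h_d(\sigma)$, and $h_{d-1}$ is defined the same way on $Q_d^d$. The face-compatibility already checked in Section~\ref{section-pentagon} shows the side faces of $h_d(\sigma)$ are $h_{d-1}$ applied to the faces of $\sigma$, so the purely formal identity $\partial_{d+1}h_d(\sigma)=\sigma-S^d(\sigma)-h_{d-1}\partial_d(\sigma)$ modulo degenerate terms goes through as in the proof of \ref{subdiv-pentagon}, giving (3i); and (3ii) holds because a cube with two labels lifts to a single edge of $\tilde{G}$, so $\tilsig^d$ and the interpolated interior of $\tilde{\gamma}(\sigma)$ stay on that edge and rounding leaves only its two endpoints. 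Feeding (1), (2), (3i), (3ii) into the diagram chase of \ref{corr-pentagon} produces the isomorphism $S^d_*\colon\Hom_d(\CC(G))\to\Hom_d(\CC^{(2)}(G))$ of part (3).

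\emph{Vanishing, and the main obstacle.} Finally, $\Hom_d(\CC^{(2)}(G))=(0)$ for $d\ge 2$ follows exactly as \ref{main-pentagon}. For each edge $e\in E(G)$ put $\CC_d^{(e)}(G)=\{\sigma\in\CC_d(G)\mid\textrm{Im}(\sigma)\subseteq e\}$; a $d$-cube with image of size $1$ is degenerate for $d\ge 1$, so every nonzero generator of $\CC_d^{(2)}(G)$ has image exactly an edge, whence $\CC_d^{(2)}(G)=\bigoplus_{e\in E(G)}\CC_d^{(e)}(G)$ for $d\ge 1$, a decomposition preserved by $\partial$. Therefore $\Hom_d(\CC^{(2)}(G))\approx\bigoplus_e\Hom_d(\CC^{(e)}(G))$ for $d\ge 2$, and $\CC^{(e)}(G)$ is the discrete cubical chain complex of the subgraph $e\cong K_2$, a contractible graph, whose homology vanishes in positive degrees (\cite{BGJW}). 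I expect the main obstacle to be the lifting lemma together with the design of the rounding map: there is no canonical retraction $|\tilde{G}|\to V(\tilde{G})$ playing the role of $\lfloor\,\cdot\,\rfloor\colon\RR\to\ZZ$, and one must verify that ``round toward the root'' meshes with $\mathrm{CAT}(0)$ geodesic interpolation so that the analogues of \ref{ineq-pentagon}, \ref{keysteps-pentagon}, and \ref{key-step-pentagon} all hold; once those are in place, everything downstream is a faithful copy of Section~\ref{section-pentagon}.
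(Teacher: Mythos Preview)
Your proposal follows the paper's argument essentially line for line: lift to the universal covering tree $U(G)$, average in its metric realization $\overline{U(G)}$, round toward a root, project to $G$, then copy \ref{hdef-pentagon}, \ref{corr-pentagon}, and \ref{main-pentagon}. Your use of $\mathrm{CAT}(0)$ convexity is exactly the content of the paper's \ref{ineq-gen}, phrased more cleanly, and the Helly argument yields the same two-label bound.

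One point needs adjustment. You root at $r_0=\tilsig(v_0)$, so the rounding map $\lfloor\,\cdot\,\rfloor$ depends on $\sigma$. For a face $f_i^+\sigma$ your convention then rounds toward $\tilsig(0,\dots,1,\dots,0)$ rather than toward $r_0$, and these two roundings disagree on interior points of the edge joining those two vertices. Consequently $[\tilsig^N]$ restricted to that face need not equal $[\widetilde{(f_i^+\sigma)}^N]$, which is precisely the compatibility on which $\partial S^d = S^d\partial$ rests; one can write down small examples (already with $\tilde G$ a line) where the resulting chains in $\CC_{d-1}$ genuinely differ. The paper avoids this by rounding toward a single vertex $\tilde v_0$ of $U(G)$ fixed once and for all, independent of $\sigma$ (and arranging each lift to lie well away from it). With that one change your sketch goes through.
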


\begin{proof}The proof will follow the steps in Section 3, showing that with small adjustments, all of the 
arguments generalize.  In fact, the only significant challenge is generalizing  \Ref{lift-pentagon}, proving
that singular cubes $\sigma$ with $\textrm{Im}(\sigma) = G$ can be lifted to a setting where
averages can be computed. This will require constructing the 
{\em universal covering graph} $U(G)$ of a graph $G$,  the discrete analog of  a familiar topological object
(see, e.g. \cite{Mu},\cite{Hat}). 

\begin{Definition}[Graph coverings]\label{covering} Suppose that $G$ is a connected (undirected) graph. 
\begin{enumerate} 
\item A  {\em covering} of $G$ by a 
graph $\tilde{G}$ is a map $p: \tilde{G}\to G$ such that for all
$\tilde{x} \in \tilde{G}$, $p$ maps the star $E_{\tilde{x}}$ of $\tilde{x}$ (the set of edges adjacent to $\tilde{x}$) bijectively onto the star $E_{p(\tilde{x})}$ of $p(\tilde{x})$.
\item
A {\em universal covering graph} $U(G)$ of $G$ is a graph with a covering map $u:U(G)\to G$, such that for any covering $p: \tilde{G} \to G$   there exists a covering map $v: U(G) \to \tilde{G}$ such that $u = pv$. 
\end{enumerate}
\end{Definition}

Much of the theory of topological coverings carries over to graphs, considered as $1$-dimensional
cell complexes. Universal covering graphs seem to have been first constructed explicitly
in \cite{Angluin}; see also, e.g., \cite{Leighton}, \cite{Sunada} for other applications.  For our purposes, we need the properties of $U(G)$ stated in the following lemma. 

\begin{Lemma}\label{univ-cover}
Suppose that $G$ is a finite connected graph. Then 
\begin{enumerate}
\item
$U(G)$ exists and is unique up to isomorphism. 
\item
If $G$ is a tree, then $U(G)=G$; otherwise $U(G)$ is an infinite tree. 
\item
Suppose that $G$ is a graph without $3$-cycles and $4$-cycles, and $\sigma: Q_d \to G$ is a graph map for some $d \ge 1$. Then there exists a graph map 
$\tilsig: Q_d \to U(G)$ such that $\sigma = u\tilsig$. 
\item For any $q\in Q_d$, the map $\tilsig$ in (3) may be chosen so that $\tilsig(q) = \tilde{q}$, where $\tilde{q}$ is any element of $u^{-1}(\sigma(q))$, and $\tilsig$ is uniquely determined by
that choice.
\end{enumerate}
\end{Lemma}

\begin{proof}[Sketch of Proof.]
$U(G)$ may be constructed by fixing a basepoint $v_0$ in $G$, and then defining the vertex set of $U(G)$ to be the set of paths
$(v_0,v_1,\dots,v_k)$ in $G$ that are ``non-backtracking", i.e., $v_{i-1} \ne v_{i+1}$ for $i=1,\dots,k-1$. Define the edges of $U(G)$ to be pairs of paths of the form $((v_0,v_1,\dots,v_k), (v_0,v_1,\dots,v_k, v_{k+1}))$. The covering map $u: U(G)\to G$ sends $(v_0,v_1,\dots,v_k)$ to $v_k$. The vertex $\tilde{v}_0 = (v_0)\in U(G)$ will be called the {\em root} of $U(G)$. It is clear that $U(G)$ is a tree, and (2) is immediate.

The proof of (3) is as in \Ref{lift-pentagon}, except that instead of explicit rules to extend $\tilsig$ along paths in $Q_d$, we use ``stars" to
guide the construction. More precisely, if $\tilsig(v) = \tilde{x}\in U(G)$ has been defined, and $u$ is adjacent to 
$v$ in $Q_d$, define $\tilsig(u) = \tilde{x}$ if $\sigma(u) = \sigma(v)$, and otherwise, 
if $\sigma(u) = v' \in E_{\sigma(v)} $, define $\tilsig(u)$ to be the unique vertex in 
$E_{\tilde{x}}$ whose image under $p$ equals $v'$. The argument that $\tilsig$ is
well defined is exactly the same as in \Ref{lift-pentagon}. Verification of (4) is left
to the reader.
\end{proof}

A more general version of  \Ref{univ-cover}(3) for arbitrary covering graphs
$\tilde{G}$ appears in \cite{Hard}. However, in 
the present paper we require only the special case
where $\tilde{G}=U(G)$, and the argument here is self-contained.

We proceed with the sequence of lemmas need to prove \Ref{thm-largegirth}, always assuming that $G$
is a finite connected graph without $3$-cycles or $4$-cycles.
In the remainder of the proof, we will assume that
$\sigma$ is a singular $d$-cube, and 
$\tilsig$ has been (uniquely) constructed so that if
$q_0 = (0,0,\dots,0) \in Q_d$ and $\sigma(q_0) = x_0 \in G$, then 
$\tilsig(q_0) = \tilde{x}_0\in u^{-1}(x_0)$ has 
been chosen so that $\tilde{x_0}$ is at least distance $d+2$ from the root
$\tilde{v}_0$ of $U(G)$.

By construction, $U(G)$ is a discrete infinite tree. We can embed $U(G)$ into a
``continuous" tree $\overline{U(G)}$, obtained by identifying each edge in $U(G)$ with a unit interval
in $\RR$. Next define a metric $d$ on $\overline{U(G)}$ by setting $d(x,y)$ equal to 
the length (under the induced metric) of the unique path from $x$ to $y$, for all $x,y\in \overline{U(G)}$.
This will allow us to compute {\em weighted averages}
$
\alpha x + (1-\alpha) y,
$
for any $x,y \in \overline{U(G)}$ and $\alpha \in [0,1]$, by identifying the path
from $x$ to $y$ in $\overline{U(G)}$ with a segment in $\RR$.
More precisely: 
\begin{Definition}\label{weight-ave}
Suppose that $x,y\in \overline{U(G)}$ and
$\alpha \in [0,1]$. Define $\alpha x + (1-\alpha) y$ to be the unique point 
$p$ in the path from $x$ to $y$ such that $d(x,p) = \alpha d(x,y)$.
\end{Definition}

Given $\sigma: Q_d \to G$ and its extension $\tilsig: Q_d \to U(G)$, we will define an extension
$\tilsig^N$ of $\tilsig$ to the grid $Q_d^N = \{(a_1,\dots,a_d) \;|\; 0 \le a_i \le N\}$, taking values
in $\overline{U(G)}$. The construction is similar to the one in Section 3, using weighted averages along paths in $\overline{U(G)}$. However, verifying that $\tilsig^N$ 
is a graph map requires extra care because the computation is not necessarily being carried out within a fixed interval in $\RR$.

\begin{Definition}[Definition of $\tilsig^N$]
Given $\sigma: Q_d \to G$ and its lift $\tilsig: Q_d \to U(G)$, define
$\tilsig^N: Q_d^N \to \overline{U(G)}$ as follows.
\begin{enumerate}
\item For each edge in $Q_d$ parallel to the first coordinate axis, i.e., from 
$(0,\epsilon_2,\dots,\epsilon_d)$ to $(1,\epsilon_2,\dots,\epsilon_d)$, for some sequence of $\epsilon_i\in \{0,1\}$, 
subdivide the path in $\overline{U(G)}$ from $\tilsig(0,\epsilon_2,\dots,\epsilon_d)$ to
$\tilsig(1,\epsilon_2,\dots,\epsilon_d)$ into $N$ pieces with $N-1$ equally
spaced points. Map the points $(k,N\epsilon_2,\dots,N\epsilon_d) \in 
\tilsig^N, 1 \le k \le N-1$, to these $N-1$ points in $\overline{U(G)}$. 
\item
Inductively, assume that values of $\tilsig^N$ have been assigned to all  $(i-1)$-faces of $Q_d^N$ that are parallel to the first $i-1$ coordinate vectors,
i.e., faces of the form $\{(a_1,\dots,a_{i-1},N\epsilon_i,\dots,N\epsilon_d) \}$,
where $\epsilon_i,\dots,\epsilon_d$ are fixed elements of $\{0,1\}$ and 
the $a_i$ range over all possible values in $\{0,\dots,N\}$. For each sequence
$a_1,\dots,a_{i-1}$, extend $\tilsig^N$ to points in the interior of the line from 
$(a_1,\dots,a_{i-1},0,\dots,0)$ to $(a_1,\dots,a_{i-1},N,\dots,N)$ by
subdividing the segment in $\overline{U(G)}$ from 
$\tilsig(a_1,\dots,a_{i-1},0,\dots,0)$ to $\tilsig(a_1,\dots,a_{i-1},N,\dots,N)$ into $N$ equal pieces, and assigning values of $\tilsig$ to interior points accordingly. This step assigns values of $\tilsig^N$ to all $i$-faces 
of $Q_d^N$.
\item
Repeat step (2) until $i=N$.
\end{enumerate}
\end{Definition}

In order to verify that $\tilsig^N$ has the desired properties, we will need the
following result, which is a generalization of \Ref{ineq-pentagon}.

\begin{Lemma} \label{ineq-gen}
Suppose that $x,y,u,v\in \overline{U(G)}$, and suppose $d(x,y)\le \delta$ and $ d(u,v) \le \delta$, where $\delta\le 1$. Then
for any $\alpha, 0 \le \alpha \le 1$, we have $d(\alpha x + (1-\alpha) u, \alpha y + (1-\alpha) v) \le \delta$.
\end{Lemma}
\begin{proof}
If $x,y,u$ and $v$ all lie on a single path in $\overline{U(G)}$, we can identify this path with an interval in $\RR$ and, for example,
the point $\alpha x + (1-\alpha) y$ may be computed using
ordinary real arithmetic. In this interval we have $x-y\in [-\delta,\delta]$ and $u-v \in [-\delta,\delta]$, which imply
\begin{equation}\label{ineq-proof}
(\alpha x + (1-\alpha) u)-(\alpha y + (1-\alpha) v) = \alpha(x-y) + (1-\alpha) (u-v) \in [-\delta, \delta],
\end{equation}
as desired. If $x,y,u$ and $v$ do not lie on a single path, then some 
additional argument is needed.

Let $p = \alpha x + (1-\alpha) u$ and $q = \alpha y + (1-\alpha)v$, where
these points are computed in $\overline{U(G)}$ as defined 
as in \Ref{weight-ave}, using
the metric on $\overline{U(G)}$.

Since $d(x,y)\le 1$, $x$ and $y$ must either lie on a single edge of $\overline{U(G)}$, or on two adjacent edges, and $u$ and $v$ are situated similarly. Up to obvious permutations of the labels, there are only two configurations representing the possible arrangements of $x,y,u$ and $v$ in
$\overline{U(G)}$, as shown in the following diagrams.

\begin{center}
\begin{tikzpicture}[scale=.7]
\def\shfta{.43}
\def\shftb{.5}


\draw[very thick] (1,.95) -- (5,.95) ; 
\draw[very thick] (1,1.05) -- (5,1.05) ; 
\draw[very thick,red] (0,2) -- (1,1) -- (0,0); 
\draw[very thick,green] (5,1) -- (6.5,1); 

\draw[very thick] (10,.95) -- (14,.95); 
\draw[very thick] (10,1.05) -- (14,1.05); 
\draw[very thick,red] (9,2) -- (10,1); 
\draw[very thick,red] (9,0) -- (10,1); 
\draw[very thick,green] (14,1) -- (15,2); 
\draw[very thick,green] (14,1) -- (15,0); 

\draw[fill=black] (9,0) circle (5pt);
\draw[fill=black] (9,2) circle (5pt);
\draw[fill=black] (10,1) circle (5pt);
\draw[fill=black] (14,1) circle (5pt);
\draw[fill=black] (15,2) circle (5pt);
\draw[fill=black] (15,0) circle (5pt);

\draw[fill=black] (0,0) circle (5pt);
\draw[fill=black] (1,1) circle (5pt);
\draw[fill=black] (0,2) circle (5pt);
\draw[fill=black] (5,1) circle (5pt);
\draw[fill=black] (6.5,1) circle (5pt);

\node [color=blue] at (0-\shfta,0+\shfta) {\Large $y$};
\node [color=blue] at (0-\shfta,2+\shfta) {\Large $x$};
\node [color=blue] at (5+\shfta,1+\shfta) {\Large $u$};
\node [color=blue] at (6.5+\shfta,1+\shfta) {\Large $v$};

\node [color=blue] at (9-\shfta,0+\shfta) {\Large $y$};
\node [color=blue] at (9-\shfta,2+\shfta) {\Large $x$};
\node [color=blue] at (15+\shfta,2+\shfta) {\Large $u$};
\node [color=blue] at (15+\shfta,0+\shfta) {\Large $v$};

\end{tikzpicture}
\end{center}
In both diagrams, the doubled lines represent the portions of the $xu$ and $yv$ paths that overlap, and the red and green segments represent portions that do not overlap. The overlapping portion must be non-empty, but 
could consist of a single point.

Let $T=T(x,y,u,v)$ denote the (metric) subtree of $\overline{U(G)}$ obtained by taking the union of the paths from $x$ to $u$ and $y$ to $v$. 
Let $\overline{T}\subseteq \RR$ denote an interval in $\RR$ obtained by 
taking the union of two intervals obtained by mapping each of the $xu$ and $yv$ paths isometrically into $R$, in such a way that the overlapping portions
coincide. For example, $\overline{T}$ might look like this, with $p$
and $q$ included:

\begin{center}
\begin{tikzpicture}[scale=.8]
\def\shfta{.3}


\draw[very thick] (1,.95) -- (5,.95) ; 
\draw[very thick] (1,1.05) -- (5,1.05) ; 
\draw[very thick,red] (-1,1) -- (1,1); 
\draw[very thick,green] (5,1) -- (6.5,1);

\draw[fill=black] (-1,1) circle (5pt);
\draw[fill=black] (0,1) circle (5pt);
\draw[fill=black] (1,1) circle (5pt);
\draw[fill=black] (2.5,1) circle (5pt);
\draw[fill=black] (3.5,1) circle (5pt);
\draw[fill=black] (5,1) circle (5pt);
\draw[fill=black] (6.5,1) circle (5pt);

\node [color=blue] at (-1-\shfta,1+\shfta) {\Large $y$};
\node [color=blue] at (0-\shfta,1+\shfta) {\Large $x$};
\node [color=blue] at (5+\shfta,1+\shfta) {\Large $u$};
\node [color=blue] at (6.5+\shfta,1+\shfta) {\Large $v$};
\node [color=purple] at (2.5-\shfta,1+\shfta) {\Large $p$};
\node [color=purple] at (3.5-\shfta,1+\shfta) {\Large $q$};

\end{tikzpicture}
\end{center}
In this picture, only the segment represented by the doubled lines
is guaranteed to be isometric to the corresponding segment in $\overline{U(G)}$. 

If $p$ and $q$ both lie in doubled portion (as they do in the picture
above), 
the calculation
in \Ref{ineq-proof} applies, and the desired inequality follows. In fact,  
\Ref{ineq-proof} applies unless
$p$ and $q$ both lie in the red region or both lie in the green region, since in all other cases the relevant components
of \Ref{ineq-proof} in $T$ may be computed 
using
real arithmetic in $\overline{T}$.

The remaining cases are easy to deal with, since if $p$ and $q$ both lie
in the red region or the green region, then $d(p,q)$ is bounded by
$d(x,y)$ or $d(u,v)$, as appropriate. By assumption, both 
of these distances are $\le \delta$,
and the conclusion follows.
\end{proof}

\begin{Definition}[Analog of \Ref{round-pentagon}]
Define maps 
$\lfloor \cdot \rfloor : \overline{U(G)} \to U(G)$ (rounding), and
$[\cdot]: U(G) \to G$ (residue),
as follows:
\begin{enumerate}
\item If $\bar{x} \in \overline{U(G)}$, and $\bar{x}$ lies in an edge $e$, define
$\lfloor \bar{x} \rfloor$ to be the vertex of $e$ closest to the root $\tilde{v}_0$. 
\item If $\tilde{x} \in U(G)$, define $[\tilde{x}]= u(\tilde{x})$, the projection of $\tilde{x}$
onto $G$.
\end{enumerate}
\end{Definition}
\begin{Lemma}\label{round-gen}
If $\bar{x}, \bar{y} \in \overline{U(G)}$ and $d(\bar{x},\bar{y}) \le 1$, then 
$d(\lfloor\bar{x}\rfloor, \lfloor \bar{y} \rfloor) \le 1$.  If $\tilde{x}, \tilde{y}\in U(G)$ and
$d(\tilde{x},\tilde{y}) \le 1$, then $d([\tilde{x}],[\tilde{y}) ]\le 1$.
\end{Lemma}
\begin{proof} These statements are elementary.
\end{proof}

\begin{Definition}[Analog of \Ref{sn-pentagon}]
If $\sigma: Q_d \to G$ and $\tilsig^N:  Q_d^N \to \overline{U(G)}$ is the map
defined above, define $[\tilsig^N]: Q_d^N \to G$ by
\[
[\tilsig^N] (a_1, \dots, a_d) = \big[ \lfloor \tilsig^N(a_1,\dots,a_d)\rfloor \big ].
\]
Finally, define $S^N(\sigma)$ to be the sum (in $C_d(G)$) of the small subcubes 
appearing in $[\tilsig^N]$.
\end{Definition}
The following corollary is 
easily proved using 
\Ref{ineq-gen} and \Ref{round-gen}.

\begin{Corollary}[Analog of \Ref{cor-parts12-pentagon}] 
\begin{enumerate}
\item
$[\tilsig^N]$ is a graph map from $Q_d^N$ to $G$.
\item
If $n \ge d$, then every small subcube appearing in $[\tilsig^N]$ has at most two labels. In other
words, $S^N: C_d(G) \to C_d^{(2)}(G)$.
\end{enumerate}
\end{Corollary}
To complete the proof of \Ref{thm-largegirth} we must construct maps $h_d: \CC_d(G)  \to \CC_{d+1}(G)$ and
$h_{d-1}: \CC_{d-1}(G)  \to \CC_d(G)$ and show that they define a chain homotopy.
To establish part (1) of \Ref{thm-largegirth}, the constructions of $h_d$
and $h_{d-1}$ in
Section 3 carry over with almost no change. 
The only non-trivial step is showing that 
$h_d$ and $h_{d-1}$ are defined by graph maps
on the generators of $\CC_d(G)$ and $\CC_{d-1}(G)$, but
this is a straightforward consequence
of \Ref{ineq-gen} and \Ref{round-gen}. 
Once these details are established,  the arguments proving parts (2) and (3) carry over as well, and the proof of \Ref{thm-largegirth} is complete.
\end{proof}

\section{Non-vanishing Homology in High Dimensions}
In this section we will construct an infinite sequence of graphs $\{G_d\}_{d \ge 0}$ such that 
$\Hom_d^\Cube(G_d) \ne (0)$ for all $d$. 
The construction is inspired by 
\cite[Definition 5.2]{BCW} but includes some
additional (necessary) details.

In the following definition, if $G$ and $H$ are 
graphs, $G \,\Box \, H$ denotes
the graph with vertex set $V(G)\times V(H)$ and edges
$\{(g_1,h_1), (g_2,h_2)\}$, where either
$g_1=g_2$ and $\{h_1,h_2\}\in E(H)$ or
$h_1=h_2$ and $\{g_1,g_2\}\in E(G)$.

\begin{Definition} \label{seq-def}
Let $G$ be a graph.
\begin{enumerate} 
\item If
    $N$ is a positive integer, let $G^{\times N}$ denote the
    graph $(G\, \Box \,I_{N})/\sim$ , where
    $I_{N}$ is the path with vertex set
    $\{0,\dots,N\}$ and
    $\sim$ identifies each of the 
    subgraphs $\{0\}\times G$ and $\{N\}\times G$ to 
    single points, denoted $\overline{0}$ and 
    $\overline{N}$, respectively.
\item Define the sequence
$\{G_d\}_{d\ge  1}$ by setting 
$G_1 = G$, and $G_{d+1} = G_d^{\times (d+3)}$ for
$d\ge 1$.
\end{enumerate}
\end{Definition}

\begin{Theorem} \label{thm-seq}
If $G$ is any graph, and the sequence $\{G_d\}_{d\ge  1}$ is constructed as in \Ref{seq-def} then
$ \Hom_{d+1}^\Cube(G_{d+1})= \Hom_{d}^\Cube(G_{d})$ for all $d\ge 1$.
\end{Theorem}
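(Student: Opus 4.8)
The plan is to recognize $G^{\times N}$ as a discrete unreduced suspension of $G$ and to carry out the Mayer--Vietoris argument of \cite{BCW}, the new ingredient being that the choice $N=d+3$ is large enough to make the argument valid in degrees up to $d+2$, which is exactly where it breaks down in \cite{BCW} once $d>1$. Write $G=G_d$ and $N=d+3$, and list the vertices of $G^{\times N}$ as $\overline{0}$, $\overline{N}$, and $(g,i)$ with $g\in V(G)$ and $1\le i\le N-1$. First I would record the elementary distances $d(\overline{0},(g,i))=i$, $d(\overline{N},(g,i))=N-i$, and $d(\overline{0},\overline{N})=N$. Let $A$ be the subgraph of $G^{\times N}$ induced on the vertices other than $\overline{N}$, and $B$ the one induced on the vertices other than $\overline{0}$, so that $A\cup B=G^{\times N}$ and $A\cap B$ is the induced subgraph on the open cylinder $\{(g,i):1\le i\le N-1\}$, which is isomorphic to $G\,\Box\,I_{N-2}$.

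Next I would supply the homotopy-theoretic inputs, which are essentially those already used in \cite{BCW}. Both $A$ and $B$ are discretely contractible: the assignment $H((g,i),t)=(g,\max(i-t,0))$, with $(g,0)$ interpreted as $\overline{0}$, is a graph map $A\,\Box\,I_{N-1}\to A$ giving a discrete homotopy from $\id_A$ to the constant map at $\overline{0}$, and symmetrically for $B$ at $\overline{N}$; by homotopy invariance of discrete cubical homology \cite{BCW} this gives $\Hom_k(A)=\Hom_k(B)=(0)$ for all $k\ge 1$. In the same way $G\,\Box\,I_{N-2}$ deformation retracts, via the analogous sliding map, onto a single level $G\,\Box\,\{0\}\cong G$, so $\Hom_k(A\cap B)\cong\Hom_k(G_d)$ for all $k$. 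Checking that these sliding maps are genuine graph maps is a short case analysis on the two types of edges of a Cartesian product.

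The step I expect to be the crux is the smallness (excision) statement: every singular $k$-cube $\sigma\colon Q_k\to G^{\times N}$ with $k\le N-1=d+2$ has image contained entirely in $A$ or entirely in $B$. This is precisely where $N=d+3$ enters: a graph map is distance non-increasing, so the image of $\sigma$ has diameter at most $\mathrm{diam}(Q_k)=k\le d+2<N=d(\overline{0},\overline{N})$, hence cannot contain both apexes; omitting $\overline{N}$ puts $\sigma$ in $A$ and omitting $\overline{0}$ puts it in $B$. Since $A$, $B$, and $A\cap B$ are induced subgraphs, it follows that $\CC_k(A)+\CC_k(B)=\CC_k(G^{\times N})$ for all $k\le d+2$, and that
\[
0\longrightarrow \CC_\bullet(A\cap B)\longrightarrow \CC_\bullet(A)\oplus\CC_\bullet(B)\longrightarrow \CC_\bullet(A)+\CC_\bullet(B)\longrightarrow 0
\]
is a short exact sequence of chain complexes whose right-hand term coincides with $\CC_\bullet(G^{\times N})$ in all degrees $\le d+2$.

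Finally, I would pass to the associated long exact homology sequence; in the range $k\le d+1$ its terms are honestly those of $G^{\times N}$, and the portion around degree $d+1$ reads
\[
\Hom_{d+1}(A)\oplus\Hom_{d+1}(B)\longrightarrow \Hom_{d+1}(G^{\times N})\stackrel{\partial}{\longrightarrow}\Hom_{d}(A\cap B)\longrightarrow \Hom_{d}(A)\oplus\Hom_{d}(B).
\]
For $d\ge 1$ the two flanking terms vanish by the contractibility of $A$ and $B$, so $\partial$ is an isomorphism $\Hom_{d+1}(G^{\times N})\cong\Hom_{d}(A\cap B)\cong\Hom_{d}(G_d)$. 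Since $G_{d+1}=G_d^{\times(d+3)}=G^{\times N}$, this gives $\Hom^\Cube_{d+1}(G_{d+1})\cong\Hom^\Cube_{d}(G_d)$ for all $d\ge 1$, as claimed. The only place the argument strengthens \cite{BCW} is the third paragraph: there $N$ was taken too small for excision to hold once $d>1$, and raising it to $d+3$ repairs exactly that gap.
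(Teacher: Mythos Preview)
Your proof is correct and follows essentially the same route as the paper: the same decomposition $A=G_{d+1}\setminus\{\overline{N}\}$, $B=G_{d+1}\setminus\{\overline{0}\}$, the same diameter bound $\mathrm{diam}(Q_k)=k\le d+2<N$ to force each singular $k$-cube into $A$ or $B$ (this is the paper's \Ref{stretch}), the same contractibility of $A$ and $B$ and retraction of $A\cap B$ onto $G_d$ (cited from \cite{BGJW} rather than written out), and the same four-term fragment of Mayer--Vietoris to conclude. The only cosmetic difference is that you spell out the sliding homotopies explicitly where the paper cites prior work.
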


\begin{figure}
\begin{center}
\begin{tikzpicture}[scale =.4]
\def\shftup{4}
\coordinate (p0) at (5,0);
\coordinate (p1) at (0.1,1.5);\coordinate (p2) at (5,2);\coordinate (p3) at (10,5);\coordinate (p4) at (7,6);\coordinate (p5) at (2,5);
\coordinate (p6) at (0.1,1.5+\shftup);\coordinate (p7) at (5,2+\shftup);\coordinate (p8) at (10,5+\shftup);\coordinate (p9) at (7,6+\shftup);\coordinate (p10) at (2,5+\shftup);
\coordinate (p11) at (0.1,1.5+2*\shftup);\coordinate (p12) at (5,2+2*\shftup);\coordinate (p13) at (10,5+2*\shftup);\coordinate (p14) at (7,6+2*\shftup);\coordinate (p15) at (2,5+2*\shftup);
\coordinate (p16) at (5,16);
\draw[fill=black] (p0) circle (6pt);
\draw[fill=black] (p1) circle (6pt);\draw[fill=black] (p2) circle (6pt);\draw[fill=black] (p3) circle (6pt);\draw[fill=black] (p4) circle (6pt);\draw[fill=black] (p5) circle (6pt);
\draw[fill=black] (p6) circle (6pt);\draw[fill=black] (p7) circle (6pt);\draw[fill=black] (p8) circle (6pt);\draw[fill=black] (p9) circle (6pt);\draw[fill=black] (p10) circle (6pt);
\draw[fill=black] (p11) circle (6pt);\draw[fill=black] (p12) circle (6pt);\draw[fill=black] (p13) circle (6pt);\draw[fill=black] (p14) circle (6pt);\draw[fill=black] (p15) circle (6pt);
\draw[fill=black] (p16) circle (6pt);
\draw[thick] (p1) -- (p2) -- (p3) -- (p4) -- (p5) -- (p1) ;
\draw[thick] (p0)--(p1);\draw[thick] (p0)--(p2);\draw[thick] (p0)--(p3);\draw[thick] (p0)--(p4);\draw[thick] (p0)--(p5);
\draw[thick] (p6) -- (p7) -- (p8) -- (p9) -- (p10) -- (p6) ;
\draw[thick] (p16)--(p11);\draw[thick] (p16)--(p12);\draw[thick] (p16)--(p13);\draw[thick] (p16)--(p14);\draw[thick] (p16)--(p15);
\draw[thick] (p11) -- (p12) -- (p13) -- (p14) -- (p15) -- (p11) ;
\draw[thick] (p6)--(p1);\draw[thick] (p7)--(p2);\draw[thick] (p8)--(p3);\draw[thick] (p9)--(p4);\draw[thick] (p10)--(p5);
\draw[thick] (p6)--(p11);\draw[thick] (p7)--(p12);\draw[thick] (p8)--(p13);\draw[thick] (p9)--(p14);\draw[thick] (p10)--(p15);
\end{tikzpicture}
\end{center}
\caption{Graph $G_2 = G_1^{\times 4}$, obtained from $G_1=\ZZ_5$.}
\label{G3}
\end{figure}
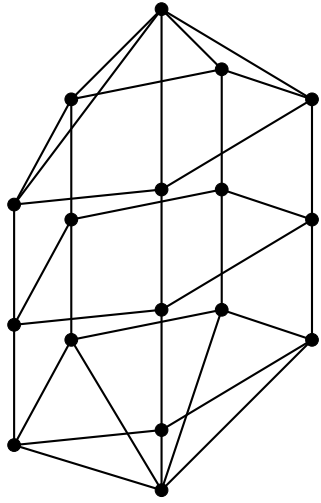

For example, if $G = G_1 = \ZZ_5$, then for the graph
$G_2$ shown in \Ref{G3}, we have $\Hom^\Cube_2(G_2) = R$.

The proof of \Ref{thm-seq} depends on the 
following lemma. 
\begin{Lemma}\label{stretch} If $G$ is any graph, and
 $\sigma: Q_d\to G^{\times N}$ is a singular
$d$-cube, then if $N\ge d+1$, the image
of $\sigma$ cannot contain both 
$\overline{0}$ and
$\overline{N}$.
\end{Lemma}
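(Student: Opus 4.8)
The plan is to exploit a \emph{level function} on $G^{\times N}$ that is $1$-Lipschitz with respect to the graph metric, and then combine this with the fact that $Q_d$ has diameter $d$.

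First I would define $\ell \colon V(G^{\times N}) \to \{0,1,\dots,N\}$ by $\ell(\overline 0) = 0$, $\ell(\overline N) = N$, and $\ell(g,i) = i$ for $1 \le i \le N-1$. This is well defined on the quotient $(G\,\Box\,I_N)/\!\sim$ precisely because $\sim$ collapses the subgraphs $\{0\}\times G$ and $\{N\}\times G$, each of which already has constant level ($0$ and $N$ respectively) in $G\,\Box\,I_N$. Next I would check that $\ell$ changes by at most $1$ along every edge of $G^{\times N}$: on the ``horizontal'' edges $\{(g_1,i),(g_2,i)\}$ with $\{g_1,g_2\}\in E(G)$ the function $\ell$ is constant; on the ``vertical'' edges $\{(g,i),(g,i+1)\}$ it changes by exactly $1$; and on the edges joining $\overline 0$ to some $(g,1)$, or $\overline N$ to some $(g,N-1)$, it again changes by exactly $1$. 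Hence $|\ell(x)-\ell(y)|\le 1$ for every edge $\{x,y\}$ of $G^{\times N}$.

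Now, given a graph homomorphism $\sigma \colon Q_d \to G^{\times N}$, the composite $\ell\circ\sigma \colon V(Q_d)\to \ZZ$ satisfies $|\ell\sigma(u) - \ell\sigma(v)| \le 1$ whenever $\{u,v\}\in E(Q_d)$: if $\sigma$ collapses the edge the difference is $0$, and otherwise $\{\sigma(u),\sigma(v)\}$ is an edge of $G^{\times N}$ and the previous paragraph applies. Since any two vertices of $Q_d$ can be joined by a path of length at most $d$ (as already used in the proof of \Ref{keysteps-pentagon}), it follows that $|\ell\sigma(u) - \ell\sigma(v)| \le d$ for \emph{all} $u,v\in V(Q_d)$. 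If the image of $\sigma$ contained both $\overline 0$ and $\overline N$, there would be vertices $u,v$ of $Q_d$ with $\ell\sigma(u)=0$ and $\ell\sigma(v)=N$, forcing $N \le d$ and contradicting the hypothesis $N\ge d+1$. This completes the argument. There is essentially no serious obstacle: the only points requiring care are the well-definedness of $\ell$ on the quotient and the routine enumeration of edge types.
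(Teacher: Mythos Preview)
Your proof is correct and follows essentially the same approach as the paper: both use that $Q_d$ has diameter $d$ while $\overline 0$ and $\overline N$ are at distance $N$ in $G^{\times N}$, so a graph map cannot hit both when $N\ge d+1$. Your level function $\ell$ is simply an explicit witness for the lower bound $d(\overline 0,\overline N)\ge N$, which the paper asserts directly; otherwise the arguments coincide.
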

\begin{proof}
Since $\sigma$ is a graph map,  $d(\sigma(x),\sigma(y))\le 
d(x,y)$ for all $x,y\in Q_d$.  Since 
diameter($Q_d)=d$ and $d(\overline{0},\overline{N})= N$ in $G^{\times N}$, the result
follows. 
\end{proof}

\begin{proof}[Proof of \Ref{thm-seq}]
Fix $d \ge 1$, and define
$
A = G_{d+1} - \overline{0}$
and
$
B = G_{d+1} - \overline{d+3}$.
It follows from \Ref{stretch} that 
\[\CC_k(G_{d+1})=\CC_k(A) + \CC_k(B)\]
for all $k \le d+2$. This enables us
to derive a segment of the Mayer-Vietoris
sequence for $A,B$ and $G_{d+1}$. More
precisely, for $k = d, d+1, d+2$ we have short exact sequences
\begin{equation} \label{short}
\xymatrix{
0  \ar[r]  & 
\CC_k(A\cap B ) \ar[r]^>>>>>{i} & \CC_k(A)\oplus \CC_k(B)  \ar[r]^>>>>>{j} & \CC_k(G_{d+1}) \ar[r] & 0
}
\end{equation}
where $i(x) = (x,-x)$ for all $x\in A\cap B$ and $j(x,y) = x+y$ for all $x\in A,\, y \in B$. Using 
standard arguments (e.g. \cite[\S 2.1]{Hat}) one obtains the following exact sequence in homology:
\begin{align}\label{exact-hom}
\Hom^\Cube(&\CC_{d+1}(A))\oplus 
\Hom^\Cube(\CC_{d+1}(B)) 
\stackrel{j_*}{\longrightarrow}
\Hom^\Cube(\CC_{d+1}(G_{d+1})) \\ \nonumber
&
\stackrel{\Delta}{\longrightarrow}
\Hom^\Cube(\CC_{d}(A\cap B)) 
\stackrel{i_*}{\longrightarrow}
\Hom^\Cube(\CC_{d}(A)\oplus
\Hom^\Cube(\CC_{d}(B)).
\end{align}
We emphasize that \Ref{exact-hom} is not meant to be part of a full
Mayer-Vietoris sequence, since the
sequences \Ref{short} are not guaranteed
to be exact for $k > d+2$.
In \Ref{exact-hom} the map $\Delta$ may be defined
as follows: if $[x]\in
\Hom^\Cube (\CC_{d+1}(G_{d+1}))$, write 
$x = y_A + y_B$, where $y_A \in \CC_{d+1}(A)$ and
$y_B \in \CC_{d+1}(B)$. Then
$\Delta[x] = [\partial y_A] = [-\partial y_B] 
\in \CC_d(A\cap B)$.

It follows from results in 
\cite[\S 4]{BGJW} that 
$\Hom^\Cube(\CC_k(A))= \Hom^\Cube(\CC_k(B) = (0)$ for $k\ge 1$,
since $A$ and $B$ are contractible (in the
sense of \cite{BKLW}) to single-point graphs.
Furthermore,  $\Hom^\Cube(\CC_d(A\cap B)) \approx
\Hom^\Cube (\CC_d(G_{d}))$ since 
$G_d$ is a deformation retraction (in the
sense of \cite[\S 4]{BGJW}) of $A\cap B$.
Hence \Ref{exact-hom} reduces to the
exact sequence
\[
0
\stackrel{j_*}{\longrightarrow}
\Hom^\Cube(\CC_{d+1}(G_{d+1})) 
\stackrel
{\Delta}
{\longrightarrow}
\Hom^\Cube(\CC_{d}(G_d))
\stackrel
{i_*}
{\longrightarrow}
0,
\]
and the proof is complete.
\end{proof}
We do not know if our construction of $G_{d+1}$
in \Ref{seq-def} is ``tight", i.e. whether \Ref{thm-seq} would hold if we defined $G_{d+1} = G_d^{\times N}$ for some smaller
value of $N$. Our proof depends on the choice of $N = d+3$ but, for example, computation shows that if $G = G_1 = \ZZ_5$, 
then the graph $G'_2 = G_1^{\times 3}$ also has homology equal to $R$ in dimension $2$. It would be interesting to explore whether $d+3$ in \Ref{seq-def}(2) could be reduced to $d+2$ in general.

\section{Final Remarks}

The main results in this paper (and much more) could be proved more easily if we had a complete Mayer-Vietoris theory at our disposal. As in classical treatments (e.g. \cite{Hat},\cite{Mu}), the main tool in would be a 
``covering lemma" stating that if a graph $G$ can be covered (set-theoretically) by a family of subgraphs ${\mathcal K} = \{K_i\}$ satisfying appropriate ``neighborhood'' conditions, then
$\Hom^\Cube(\CC(G)) = \Hom^\Cube(\CC^{\mathcal K}(G))$, where
\[\CC_d^{\mathcal K} =
\{\sigma \in \CC_d(G) \;|\; \textrm{Im}(\sigma) \subseteq K_i \textrm{ for some $i$}\}.
\]
In fact for graphs $G$ with no $3$-cycles or
$4$-cycles, the subdivision techniques in Sections 3 and 4 of this paper provide
such a theory, but it is of little independent
value because all homology groups are trivial
in dimension $d\ge 2$.  

It is tempting to speculate that
for arbitrary graphs, the covering space arguments in Section 4 might be modified and/or extended to prove something like the following:

\begin{Conjecture}
Let $G$ be a graph, and let 
${\mathcal K} = \{K_i\}$ be a covering of $G$ by subgraphs such that every edge, $3$-cycle, and
$4$-cycle of $G$ is contained in some $K_i$. Then
$\Hom^\Cube(G) \approx \Hom^\Cube(\CC^{\mathcal K}(G))$.
\end{Conjecture}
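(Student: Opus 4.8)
The plan is to follow the template of Sections 3 and 4: produce a \emph{subdivision operator} $S$ on $\CC(G)$ that is a chain map, is chain homotopic to the identity, and has the property that for every singular cube $\sigma$ some iterate $S^{\circ n}\sigma$ lies in $\CC^{\mathcal{K}}(G)$; combined with a ``Lebesgue number'' argument as in the classical proof of the small-simplices theorem (e.g.\ \cite{Massey91}, \cite{Hat}), this would give, exactly as in the proof of \Ref{corr-pentagon}, that the inclusion $\CC^{\mathcal{K}}(G)\hookrightarrow\CC(G)$ induces an isomorphism in homology. One feature must be present from the outset that was absent in Sections 3 and 4: we cannot subdivide cubes all the way down to image size $\le 2$. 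A subdivision chain homotopic to the identity with image in $\CC^{(2)}(G)$ would force $\Hom^\Cube_d(G)=(0)$ for every graph $G$ and every $d\ge 1$, contradicting the graphs $\{G_d\}_{d\ge 1}$ of Section 5. So the subdivision must be \emph{adaptive}, refining a cube only until its image lies inside some $K_i$ --- and $K_i$ may be large enough to support homology.

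Next, the covering space. To have room for subdivision one must lift singular cubes out of $G$, as in \Ref{lift-pentagon} and part (3) of \Ref{univ-cover}; but now, because $3$- and $4$-cycles of $Q_d$ (recall $Q_d$ itself contains $4$-cycles) can be sent to $3$- and $4$-cycles of $G$, the tree $U(G)$ is the wrong target. Let $K(G)$ be the $2$-complex obtained from $G$ by attaching a $2$-cell along each $3$-cycle and each $4$-cycle, and let $p_{\mathcal{K}}\colon \hat G_{\mathcal{K}}\to G$ be the $1$-skeleton of the cover of $K(G)$ corresponding to the normal subgroup $N=\langle\langle\,\pi_1(K_i):i\,\rangle\rangle\trianglelefteq\pi_1(K(G))$, chosen so that each $K_i$, together with the $2$-cells of $K(G)$ it carries, lifts isomorphically from every basepoint. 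The lifting statement to be proved (the analog of part (3) of \Ref{univ-cover}) is: every singular $d$-cube $\sigma\colon Q_d\to G$ lifts to a graph map $\tilsig\colon Q_d\to \hat G_{\mathcal{K}}$ with $\sigma=p_{\mathcal{K}}\tilsig$, uniquely once the image of one vertex is prescribed. This follows from the same loop-filling argument as in \Ref{lift-pentagon}: by adjacent transpositions any loop in $Q_d$ reduces to a product of $2$-cycles $x\bar x$, so its image in $G$ is homotopic to a product of $4$-cycles, which die in $\pi_1(K(G))$ and hence in $\pi_1(K(G))/N$, so the monodromy of the lifting procedure vanishes. (The corresponding statement for arbitrary covering graphs is in \cite{Hard}; the point here is the choice of cover.)

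With $\tilsig$ in hand one would equip the cover of $K(G)$ with a length metric (unit edges, Euclidean triangular and square $2$-cells) and repeat the machinery of \Ref{thm-largegirth}: spread $\tilsig$ over the grid $Q_d^N$ by iterated geodesic averaging to obtain $\tilsig^N$, round it onto the $1$-skeleton $\hat G_{\mathcal{K}}$, project by $p_{\mathcal{K}}$ to $G$, and sum the resulting small subcubes to define $S^N(\sigma)$; the homotopy maps $h_d,h_{d-1}$ would be built from a $(d{+}1)$-dimensional grid exactly as in \Ref{hdef-pentagon}. The analogs of \Ref{ineq-gen} and \Ref{round-gen} would show that adjacent grid values move by at most $1/N$, so after $n$-fold iteration every small subcube has image of diameter tending to $0$; since the image of $\tilsig$ is compact and is covered by the (isomorphic) lifts of the $K_i$, a Lebesgue-number argument then places every sufficiently fine subcube inside a lift of some $K_i$, so $S^{\circ n}(\sigma)\in\CC^{\mathcal{K}}(G)$ for $n\gg 0$. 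That $h$ preserves ``image contained in a $K_i$'' (the analog of part (3ii) of \Ref{subdiv-pentagon}) should be routine once the geometry is under control, and one then concludes as in \Ref{corr-pentagon} and \Ref{thm-largegirth}.

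The main obstacle is precisely the geometry of the chosen cover. The estimate of \Ref{ineq-gen} --- that geodesic averaging is $1$-Lipschitz on nearby points --- rested on $\overline{U(G)}$ being a tree, hence uniquely geodesic. The cover of $K(G)$ corresponding to $N$ is \emph{not} simply connected in general, and even the universal cover $\widetilde{K(G)}$ need not be $\mathrm{CAT}(0)$: a vertex of $G$ whose link in $K(G)$ is a short cycle violates the link condition, and $K(G)$ need not be aspherical. So the heart of the problem is to either (i) choose geodesics and representative averages systematically so that the inequalities of \Ref{ineq-gen} and \Ref{round-gen} survive in a non-$\mathrm{CAT}(0)$ cover --- perhaps after first subdividing the $2$-cells of $K(G)$ to repair links --- or (ii) dispense with metric averaging altogether and define a purely combinatorial subdivision of graph maps that stays uniformly close to $\sigma$, assembles into a chain map with matching faces (the analog of \Ref{cor-parts12-pentagon}), and provably never ``loses'' a $3$- or $4$-cycle. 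This last bookkeeping --- shrinking everything in sight while keeping exactly the $3$- and $4$-cycles intact --- is where the difficulty of the conjecture is concentrated, and is the same phenomenon that makes \Ref{maintheorem} and \Ref{conjecture1} delicate.
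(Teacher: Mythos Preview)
The conjecture is false, and the paper says so immediately after stating it. The counterexample given is $G=Q_3$ with $\mathcal{K}$ the covering by its six quadrilateral faces: every edge and every $4$-cycle of $Q_3$ lies in some $K_i$ (there are no $3$-cycles), yet $\Hom_2^\Cube(Q_3)=(0)$ while direct computation gives $\Hom_2^\Cube(\CC^{\mathcal{K}}(Q_3))=R$. So no proof strategy can succeed; your outline must contain an irreparable gap, not merely a technical one.

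You have in fact located the gap yourself but mislabeled it as a difficulty to be overcome. Run your construction on the counterexample. Since $K(Q_3)$ is a $2$-sphere, $\pi_1(K(Q_3))$ is trivial and your cover $\hat G_{\mathcal{K}}$ is $Q_3$ itself; the lift $\tilsig$ is just $\sigma$. The metric space in which you propose to average is the surface of a cube, which is not uniquely geodesic---the vertices $000$ and $111$ are joined by six shortest paths---so the analog of \Ref{ineq-gen} fails and the subdivision is not even well defined on the identity $3$-cube. More to the point, no replacement construction can work. Let $\sigma\colon Q_3\to Q_3$ be the identity. Then $\partial\sigma\in\CC_2^{\mathcal{K}}(Q_3)$ (each face lands in a single $K_i$) and represents the nontrivial class in $\Hom_2^\Cube(\CC^{\mathcal{K}}(Q_3))=R$. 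If a chain map $S$ and homotopy $h$ with your stated properties existed, then from $\sigma-S^{\circ n}\sigma=\partial D_n\sigma+D_n\partial\sigma$ with $D_n=\sum_{i<n}S^{\circ i}h$ one obtains $\partial\sigma=\partial\bigl(S^{\circ n}\sigma+D_n\partial\sigma\bigr)$. For $n$ large $S^{\circ n}\sigma\in\CC_3^{\mathcal{K}}$, and since your analog of (3ii) forces $h$ (hence $S$, hence each $S^{\circ i}h$) to preserve $\CC^{\mathcal{K}}$, also $D_n\partial\sigma\in\CC_3^{\mathcal{K}}$. Thus $\partial\sigma$ would bound in $\CC^{\mathcal{K}}$, a contradiction. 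The step you called ``routine once the geometry is under control'' is therefore exactly where the argument must, and does, break.
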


However, this conjecture is false:  
for example, 
if $G=Q_3$, the $3$-cube, then 
$\Hom_2^\Cube(G)=(0)$ by Corollary 4.3 of 
\cite{BGJW}. On the other hand, if ${\mathcal K}$ is the covering of $G$ by its six quadrilateral faces, then direct
computation (omitted here) shows that
$\Hom_2^\Cube(\CC^{\mathcal K}(G))= R$. 

While it falls short of providing a
Mayer-Vietoris theory for 
$\Hom^\Cube(G)$, the following proposition 
does help identify what $\Hom^\Cube(\CC^\kK(G))$ is computing, and may be of independent interest.

\begin{Proposition}\label{prop-covering}
Let $G$ be a graph, and suppose that 
${\mathcal K}$ is the covering of $G$ by
its edges, $3$-cycles, and $4$-cycles.
Then 
$\Hom^\Cube(\CC^\kK(G)) \approx
\Hom^\Sing(G^*)$, where the latter denotes the 
singular homology of the cell complex $G^*$ obtained from $G$ (considered as a $1$-complex) with $3$- and $4$-cycles filled in as $2$-dimensional cells.
\end{Proposition}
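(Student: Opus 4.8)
The plan is to build an explicit chain-level comparison between the cubical complex $\CC^\kK(G)$ and a cellular (or singular) chain complex for $G^*$, and then show it induces an isomorphism on homology. First I would observe that $\CC^\kK_d(G)$ is generated by non-degenerate singular cubes $\sigma : Q_d \to G$ whose image lies in a single edge, $3$-cycle, or $4$-cycle. Since $\Hom^\Cube$ of an edge, a $3$-cycle, and a $4$-cycle are all trivial in dimension $\ge 2$ (the first two are contractible, the $4$-cycle is homologically trivial in all dimensions by the results cited from \cite{BCW}, \cite{BGJW}), a direct-sum/Mayer--Vietoris style decomposition argument over the cells of $\kK$ should collapse $\CC^\kK(G)$ up to homology to something concentrated in low degrees. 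Concretely I would aim to show $\Hom_d^\Cube(\CC^\kK(G)) = (0)$ for $d \ge 3$ directly, and then compute $\Hom_0$, $\Hom_1$, $\Hom_2$ by hand.

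The cleanest route for the low-degree computation is to recognize that $\CC^\kK_{\le 2}(G)$, after passing to non-degenerate representatives, already looks like a cellular chain complex of $G^*$. In degree $0$ we have the vertices of $G$; in degree $1$ the (non-degenerate) singular $1$-cubes are exactly the oriented edges of $G$, with $\partial_1$ the usual graph boundary, so $\Hom_0^\Cube(\CC^\kK(G)) \approx \HO_0(G^*)$ and the image of $\partial_1$ matches. In degree $2$ the non-degenerate singular $2$-cubes with image in a fixed $3$-cycle or $4$-cycle span, modulo the image of $\partial_3^\kK$, a rank-one module generated by the class of the ``fundamental'' square or triangle-with-degenerate-edge wrapping around that cycle — this is precisely the computation underlying the fact that a $3$- or $4$-cycle has $\Hom_1^\Cube = R$ when it is a cycle and trivial $\Hom_1$ relative to its filling. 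So I would set up a map $\CC^\kK_2(G) \to \CC^\Cell_2(G^*)$ sending each such generator to (a sign times) the corresponding $2$-cell, check it is a chain map onto, compute its kernel, and verify that the kernel is exactly $\Image\,\partial_3^\kK + \DD$, giving $\Hom_2^\Cube(\CC^\kK(G)) \approx \HO_2^\Cell(G^*)$. Since $\HO_d^\Cell(G^*) = 0$ for $d \ge 3$ (as $G^*$ is a $2$-complex), together with the vanishing in degrees $\ge 3$ on the cubical side this completes the comparison, and $\Hom^\Sing(G^*) \approx \Hom^\Cell(G^*)$ by the standard fact recalled in the introduction.

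For the vanishing $\Hom_d^\Cube(\CC^\kK(G)) = (0)$, $d \ge 3$, I would use the filtration of $\CC^\kK(G)$ by the subcomplexes generated by cubes whose image lies in a fixed cell, and run an inductive Mayer--Vietoris / spectral-sequence argument over the finitely many cells of $\kK$: adjoining one cell $K$ at a time changes the homology by a relative term governed by $\Hom^\Cube$ of $K$ relative to its intersection pattern with the previously added cells. Because each $K$ is an edge, triangle, or square, and intersections of distinct cells are unions of edges and vertices, all the relevant relative groups vanish above dimension $2$; one also needs that each singular cube into $G$ with image in $K$ stays controlled, which it does by the same distance/diameter bounds used in \Ref{stretch}. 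The main obstacle I expect is the degree-$2$ bookkeeping: precisely matching the ``fundamental $2$-cube'' generators for $3$-cycles versus $4$-cycles (a triangle has no non-degenerate $2$-cube with that image, so its $2$-cell must be represented by a degenerate-edge square, and one must check signs and the $\partial_3$-image carefully) and confirming that no extra relations beyond those coming from $G^*$ survive. Everything else is a routine transcription of the classical cellular-vs-singular comparison, adapted to the cubical setting already developed in Sections 2–4.
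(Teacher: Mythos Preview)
Your proposal takes a different route from the paper's, and the degree-$2$ step has a real gap.

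The paper does not build an explicit chain map. Instead it sets up two double complexes over the nerve $N(\kK)$ of the cover: one with $C^\Cube_q(\kK_\sigma)$ in position $(p,q)$ (where $\kK_\sigma$ is the intersection of the cells in a $p$-simplex $\sigma$ of the nerve), and a parallel one with cellular chains $C^\Cell_q(\kK^*_{\sigma^*})$. Because every cell and every intersection of cells has trivial discrete-cubical and trivial cellular homology in positive degrees, both vertical spectral sequences have $E^1$-page concentrated in the row $q=0$, and a short case check shows the $E^1$ rows and their differentials agree. The transposed spectral sequences then compute $\Hom^\Cube(\CC^\kK(G))$ and $\Hom^\Cell(G^*)$ respectively, and since both sides are computing the homology of (isomorphic) total complexes, the result follows. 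All degrees are handled simultaneously; no chain map, no degree-by-degree bookkeeping.

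Your plan would eventually converge to something equivalent, but as written the key step does not go through. The claim that the kernel of your map $\phi_2$ equals $\Image\,\partial_3^\kK$ is false: for instance the non-degenerate $2$-cube $\sigma=(a,b,a,a)$ on an edge $\{a,b\}$ has $\phi_2(\sigma)=0$ but $\partial_2\sigma=(b,a)+(a,b)\neq 0$, so $\sigma$ cannot lie in $\Image\,\partial_3^\kK$. What you actually need is injectivity of $\phi_*$ on $H_2$, i.e.\ that every $2$-\emph{cycle} $z$ with $\phi_2(z)=0$ is a boundary in $\CC^\kK$. This is not a routine check: such a $z$ can mix contributions from many overlapping cells, there is no canonical way to split it into pieces supported on single cells, and the local vanishing $H_2^\Cube(K)=0$ does not apply directly because the individual pieces need not be cycles. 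The natural device for organising these overlaps is exactly the \v Cech/Mayer--Vietoris spectral sequence over the nerve of $\kK$ --- at which point you have arrived at the paper's argument, and the explicit chain map is no longer doing any work. (Your inductive Mayer--Vietoris for $d\ge 3$ is already this spectral sequence in disguise.)
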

\begin{proof}
  For the proof it will be convenient to consider the cellular homology $\Hom^\Cell(G^*)$ instead of $\Hom^\Sing(G^*)$. By
  \cite[Theorem 2.35]{Hat} $\Hom^\Sing(X) \approx \Hom^\Cell (X)$ for any CW-complex. In particular, it suffices to show that
  $\Hom^\Cube(\CC^\kK(G)) \approx
\Hom^\Cell(G^*)$.
  
Let $\kK^*$ be the cover of $G^*$
by its edges, triangles, and quadrangles. Note that $G^*$ has a natural CW-structure and each element of the cover is a closed subcomplex of $G^*$. First, by construction there is
a natural bijection between $\kK$ and $\kK^*$. 
For any non-empty subset $\sigma \subseteq \kK$ let $\kK_\sigma$ be the intersection 
of the elements of $\sigma$. 
Correspondingly, for any non-empty 
subset $\sigma^* \subseteq \kK^*$ let 
$\kK^*_{\sigma^*}$ be the intersection of 
the elements of $\sigma^*$.

\noindent {\sf Claim 1:} For any non-empty $\sigma \subseteq \kK$ and the 
corresponding $\sigma^* \subseteq \kK^*$ 
we have $$\Hom^\Cube_* (\kK_\sigma)
\approx \Hom^\Cell_* (\kK^*_{\sigma^*}).$$
Moreover, both are trivial in homological dimensions $\geq 1$.

\noindent {\sf Proof of Claim 1:} 
For $\sigma \subseteq \kK$ with $|\sigma| > 1$, $\kK_\sigma$ and $\kK^*_{\sigma^*}$
are both either empty, one or two points, 
or a path of length $1$ or $2$. 
On these graphs and spaces the discrete cubical and 
cellular homology theories coincide and are trivial in dimensions $\geq 1$.

For $\sigma \subseteq \kK$ with 
$|\sigma| = 1$, either both $\kK_\sigma$ and $\kK^*_{\sigma^*}$ are edges, 
$\kK_\sigma$ is a triangle graph and 
$\kK^*_{\sigma^*}$ is a solid triangle, or
$\kK_\sigma$ is a quadrangle graph and 
$\kK^*_{\sigma^*}$ is a solid quadrangle.
The cellular homology groups of a solid quadrangle and a solid triangle are trivial
in homological dimension $\geq 1$, as are the discrete cubical homology groups of
a quadrangle and a triangle graph.
This proves the claim.

\noindent {\sf Claim 2:} 
Let $\tau \subseteq \sigma \subseteq \kK$. Then the induced maps $$\Hom^\Cube_0(\kK_\sigma) 
\rightarrow \Hom^\Cube_0(\kK_\tau) \text{~and~} 
\Hom^\Cell_0(\kK^*_{\sigma^*}) 
\rightarrow \Hom^\Cell_0(\kK^*_{\tau^*})$$ are either
both isomorphisms, both $0$ maps, or both are projections 
of a rank $2$ onto a rank $1$ homology group.

\noindent {\sf Proof of Claim 2:} It follows
from an analysis of the proof of the previous claim that all homology groups in dimension $0$
are of rank $1$ unless the intersection is empty (in which case it is $0$) or the intersection is a two point set (in which case
it is of rank $2$). Now the assertion follows
by inspecting the cases.

Let $N(\kK)$ be the nerve of the covering
$\kK$, that is, the simplicial complex whose $p$-simplices are the subcollections of $\kK$ of size $p+1$ with non-empty intersection.
For $p\ge 0$, let $N^{(p)}(\kK)$ denote the set of faces of dimension $p$
in $N(\kK)$. Note that by the above arguments the
nerve is the same for $\kK$ and $\kK^*$. From the given data we build, in the usual way (see, e.g., \cite[Chap. 7]{KBrown}), two double complexes 

\[
C_{p,q}^\Cube(G) = \bigoplus_{\sigma \in N^{(p)}(\kK)} C^\Cube_q(\kK_\sigma)
\]

and
\[
C_{p,q}^\Cell(G) = \bigoplus_{\sigma \in N^{(p)}(\kK)} C^\Cell_q(\kK^*_{\sigma^*})
\]
where for $\bullet \in \{\Cube,\Cell\}$ the 
vertical differential $C_{p,q}^\bullet \longrightarrow C_{p,q-1}^\bullet$ is the differential in the cubical or cellular homology theory, and
the horizontal differential $C_{p,q}^\bullet \longrightarrow C_{p-1,q}^\bullet$ is induced by the inclusion maps combined with the
sign of the differential of the nerve,
seen as a simplicial complex. 
More precisely, let $\sigma\in N^{(p)}(\kK)$, $\tau \subseteq \sigma$
and  $|\tau| \in N^{(p-1)}(\kK)$. 
It follows that $\kK_\sigma \subseteq \kK_\tau$ and $\tau = \sigma \setminus \{j\}$ for some $j \in \sigma$.
Thus $\tau$ appears in the simplicial
boundary of $\sigma$ with some sign $\epsilon_{\sigma,\tau}$. 
Now the differential $C_{p,q}^\bullet \longrightarrow C_{p-1,q}^\bullet$ is the sum of the maps $C^\bullet_q(\kK_\sigma) \rightarrow C^\bullet_q(\kK_\tau)$, for all $\tau \subseteq \sigma$, $|\tau| \in N^{(p-1)}(\kK)$, induced
by inclusion and multiplied by $\epsilon_{\sigma,\tau}$. 

The two double complexes yield with respect to the vertical differential a spectral sequence (see, e.g.,  \cite[Chapter 7]{KBrown}) with $E^1$ page given by 
$E_{p,q}^1 (\Cube)=  \bigoplus_{\sigma\in N^{(p)}(\kK)} \Hom^\Cube_q(\kK_\sigma)$
and
$E_{p,q}^1 (\Cell) =  \bigoplus_{\sigma\in N^{(p)}(\kK)} \Hom^\Cell_q(\kK^*_{\sigma^*})$. 
From the first claim we already know that
$E_{p,q}^1(\Cube) \approx E_{p,q}^1 (\Cell)$ 
and $E_{p,q}^1(\Cube) =
E_{p,q}^1(\Cell) = 0$ for $q \geq 1$.  

Next we consider the differentials of 
the $E^1$ pages in both cases.
Note that for $\bullet \in \{\Cube,\Cell\}$
the differential of 
$E_{p,q}^1(\bullet) \rightarrow 
E_{p-1,q}^1(\bullet)$ is
induced as follows. Consider
$\sigma \in N^{(p)}$  and let $\tau_0,\ldots,\tau_{p}$ be its
maximal faces.
Then we have a map in homology 
$$\Hom_{q}^\bullet(\kK_\sigma) \rightarrow
\bigoplus_{i=0}^{p-1} \Hom_q^\bullet(
\kK_{\tau_i})$$ induced by the sum of the inclusion maps weighted with the sign of
the simplicial differential. This map in turn induces the differential on $E_{p,q}^1(\bullet)$. By the second claim the maps in homology (if they are not trivial) coincide for both
theories. As a consequence the differentials
on the $E^1$-pages are the same and so 
$E_{p,q}^2(\Cube) \approx E_{p,q}^2(\Cell)$.

Since $E_{p,q}^1(\Cube) = E_{p,q}^1(\Cell) \neq 0$ implies $q=0$
the same holds true for $E_{p,q}^s(\bullet)$ and 
$s \geq 2$. But the differential on the 
$s$\textsuperscript{th} page maps  
$E_{p,q}^s(\bullet) \rightarrow E_{p-s,q+1-s}^s(\bullet)$.
This shows that the differential of the
$s$\textsuperscript{th} page is trivial for all $s \geq 2$.  Thus both sequences
have the same limit and the limit is obtained
on the $E^2$-page, with non-zero entries concentrated on the bottom row.

For any double complex $\{C_{p,q}\}$, one can construct not only a spectral sequence $\{E^r\}$, but also a second (transposed) spectral sequence 
$\{\tilde{E}^r\}$ using horizontal differentials (instead of vertical)
to define $\tilde{E}^1$, and then constructing subsequent pages by the
usual rules. In this case, it is well known (e.g. \cite[Chap. 5]{Weibel}) that both $\{E^r\}$ and $\{\tilde{E}^r\}$ compute the same homology, namely, the homology of the total complex $\{TC_d\}_{d\ge 0}$,
where $TC_d = \bigoplus_{p+q=d} C_{p,q}$.

In our situation, it can be shown
(see \cite[Chap. 7]{KBrown} for an example of the argument) that 
$\{\tilde{E}^r(\Cube)\}$ and $\{\tilde{E}^r(\Cell)\}$ both converge on
the second page $\tilde{E}^2$, with non-zero entries concentrated in the first column. In that column we have
\[
\tilde{E}^2_{0,d}(\Cube) \approx \Hom^\Cube_d(C^\kK(G))
\textrm{  and  }
\tilde{E}^2_{0,d}(\Cell) \approx \Hom^\Cell_d(G^*)
\]
for all $d\ge 0$.
Combining all of the above information, we obtain,  for all $d\ge 0$,
\begin{align*}
\Hom_d^\Cube(C^{\kK}(G)) \approx
\tilde{E}^2_{0,d}(\Cube) \approx
E^2_{d,0}(\Cube) &\\
\approx  E^2_{d,0}(\Cell)& \approx
\tilde{E}^2_{0,d}(\Cell) \approx
\Hom_d^\Cell(G^*)  
\end{align*}
as claimed, and the proof is complete.
\end{proof}

As a somewhat non-trivial illustration of \Ref{prop-covering}, let $G$ be the $1$-skeleton of an octahedron, as illustrated in 
\Ref{fig-oct}. Here $\kK$ is the covering of $G$ consisting
of 12 edges, 8 triangles, and 3 quadrangles. Computation (omitted) shows that $\Hom^\Cube_2(C^\kK(G)) = \Hom^\Sing_2(G^*) = R^4$.
On the other hand, it follows from Corollary 4.6 in \cite{BGJW} (or can be shown directly) that
$\Hom_2^\Cube(\CC(G)) = (0)$.

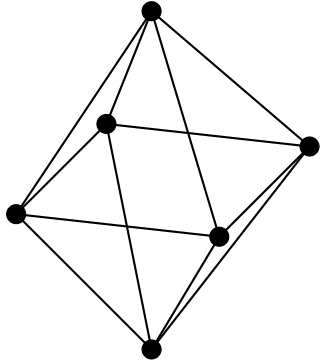
\begin{figure}
\begin{center}
\begin{tikzpicture}[scale =.6]
\def\shftup{4}

\coordinate (p0) at (3.5,-.5);
\coordinate (p1) at (.5,2.5);\coordinate (p2) at (5,2);\coordinate (p4) at (2.5,4.5);\coordinate (p3) at (7,4);\coordinate (p5) at (3.5,7);
\draw[fill=black] (p0) circle (6pt);
\draw[fill=black] (p1) circle (6pt);\draw[fill=black] (p2) circle (6pt);\draw[fill=black] (p3) circle (6pt);\draw[fill=black] (p4) circle (6pt);\draw[fill=black] (p5) circle (6pt);
\draw[thick] (p1) -- (p2) -- (p3) -- (p4) -- (p1) ;
\draw[thick] (p0)--(p1);\draw[thick] (p0)--(p2);\draw[thick] (p0)--(p3);\draw[thick] (p0)--(p4);
\draw[thick] (p5)--(p1);\draw[thick] (p5)--(p2);\draw[thick] (p5)--(p3);\draw[thick] (p5)--(p4);
\end{tikzpicture}
\end{center}
\caption{Graph $G$ = $1$-skeleton of octahedron.}
\label{fig-oct}
\end{figure}

Our results and observations highlight the 
special role played by graphs without 
$3$- and $4$-cycles, and indeed, for these graphs, the discrete and ordinary singular cubical homology theories agree completely. It would be interesting to establish the precise connections between the two theories for more general
graphs.
While results like \Ref{prop-covering} might provide information about 
$\Hom^\Cube(C^\kK(G))$ for a covering $\kK$, they do not give us 
information about $\Hom^\Cube(G)$ directly, and a richer theory is needed to help bridge this gap.

\section{Acknowledgements}
We are grateful to Eric Babson for several comments that helped clarify our proofs, especially of
\Ref{corr-pentagon} and \Ref{lift-pentagon}.

\bibliographystyle{amsplain}
\bibliography{DiscHomo}

\providecommand{\bysame}{\leavevmode\hbox to3em{\hrulefill}\thinspace}
\providecommand{\MR}{\relax\ifhmode\unskip\space\fi MR }
\providecommand{\MRhref}[2]{%
  \href{http://www.ams.org/mathscinet-getitem?mr=#1}{#2}
}
\providecommand{\href}[2]{#2}
\begin{thebibliography}{10}

\bibitem{Angluin}
D.~Angluin, \emph{Local and global properties in networks of processors},
  Proceedings of the 12th Annual Symposium on the Theory of Computing, 1980,
  pp.~82--93.

\bibitem{Atkin1974}
R.~Atkin, \emph{An algebra for patterns on a complex, i}, Internat. J.
  Man–Machine Studies \textbf{6} (1974), 285--307.

\bibitem{Atkin1976}
\bysame, \emph{An algebra for patterns on a complex, ii}, Internat. J.
  Man–Machine Studies \textbf{8} (1976)), 448--483.

\bibitem{BBLL}
E.~Babson, H.~Barcelo, M.~de~Longueville, and R.~Laubenbacher, \emph{Homotopy
  theory of graphs}, Journal of Algebraic Combinatorics \textbf{24} (2006),
  no.~1, 31--44.

\bibitem{BCW}
H.~Barcelo, V.~Capraro, and J.~A. White, \emph{Discrete homology theory for
  metric spaces}, Bull. London Math. Soc. \textbf{46} (2014), 889--905.

\bibitem{BGJW}
H.~Barcelo, C.~Greene, A.~Jarrah, and V.~Welker, \emph{Discrete cubical and
  path homologies of graphs}, Algebraic Combinatorics \textbf{2} (2019), no.~3,
  417--437.

\bibitem{BKLW}
H.~Barcelo, X.~Kramer, R.~Laubenbacher, and C.~Weaver, \emph{Foundations of
  connectivity theory for simplicial complexes}, Advances in Applied Math.
  \textbf{26} (2001), 97--128.

\bibitem{BSW}
H.~Barcelo, C.~Severs, and J.~White, \emph{k-parabolic subspace arrangements},
  Trans. Amer. Math. Soc. \textbf{363} (2011), 6063--6083.

\bibitem{Bj}
A.~Bj{\"o}rner, \emph{Topological methods}, Handbook of Combinatorics, II
  (R.~Graham, M.~Gr{\"o}tschel, and L.~Lov{\'a}sz, eds.), MIT Press, Cambridge,
  MA, 1996, pp.~1819--1872.

\bibitem{KBrown}
K.~S. Brown, \emph{Cohomology of groups}, Springer-Verlag, New York, 1982.

\bibitem{Khukhro}
T.~Delabie and A.~Khukhro, \emph{Coarse fundamental groups and box spaces},
  Proc. Roy. Soc. Edinburgh Sect. A (2019), 16 pages, published online.

\bibitem{Diestel00}
R.~Diestel, \emph{Graph theory}, Graduate Texts in Mathematics, vol. 173,
  Springer-Verlag, New York, 2000.

\bibitem{GLMY2}
A.~Grigor'yan, Y.~Lin, Y.~Muranov, and S.-T. Yau, \emph{Homotopy theory of
  digraphs}, Pure and Applied Math. Quarterly \textbf{10} (2014), 619--674.

\bibitem{Hard}
R.~Hardeman, \emph{Computing {A}-homotopy groups using coverings and lifting
  properties}, arXiv:1904.12065 (2019), 31 pages.

\bibitem{Hat}
A.~Hatcher, \emph{Algebraic topology}, Cambridge Univ. Press, Reading, MA,
  2013.

\bibitem{KraetzlLaubenbacher}
M.~Kraetzl and R.~Laubenbacher, \emph{Combinatorial and algebraic approaches to
  network analysis}, DSTO Internal Report, 2001.

\bibitem{Leighton}
F.~T. Leighton, \emph{Finite common coverings of graphs}, J. Comb. Theory, Ser.
  B \textbf{33} (1982), 231.

\bibitem{Lovasz}
L.~Lov\'asz, \emph{A homology theory for spanning trees of a graph}, Acta Math.
  Acad. Sci. Hungar. \textbf{30} (1977), 241--251.

\bibitem{Massey91}
W.S. Massey, \emph{A basic course in algebraic topology}, Graduate Texts in
  Mathematics, vol.~91, Springer, New York, 1991.

\bibitem{Mau}
S.~Maurer, \emph{Matroid basis graphs, i}, J. of Combinatorial Theory (B)
  \textbf{14} (1973), 216--240.

\bibitem{Mu}
J.~Munkres, \emph{Elements of algebraic topology}, Addison--Wesley Publishing
  Company, Reading, MA, 1984.

\bibitem{Rieser}
A.~Rieser, \emph{\v {C}ech closure spaces: A framework for discrete homotopy},
  arXiv:1708.09558, 2017.

\bibitem{Sunada}
T.~Sunada, \emph{Topological crystallography}, Surveys and Tutorials in the
  Applied Mathematical Sciences, vol.~91, Springer, Japan, 2013.

\bibitem{Vigolo}
F.~Vigolo, \emph{Fundamental groups as limits of discrete fundamental groups},
  Bull. Lond. Math. Soc. \textbf{50} (2018), 801--810.

\bibitem{Weibel}
C.~A. Weibel, \emph{Introduction to homological algebra}, Cambridge University
  Press, Cambridge, 1994.

\end{thebibliography}
\end{document}